\documentclass[a4paper,12pt,leqno]{article}

\usepackage{amssymb,amsmath}
\usepackage[abbrev]{amsrefs}
\usepackage{xy}
\xyoption{all}

\newtheorem{defn}{Definition}[section]

\newtheorem{corollary}[defn]{Corollary}
\newtheorem{rem}[defn]{Remark}
\newtheorem{exm}[defn]{Example}
\newtheorem{lemma}[defn]{Lemma}
\newtheorem{theorem}[defn]{Theorem}
\newtheorem{notat}[defn]{Notation}
\newtheorem{newpar}[defn]{}

\newtheorem{xdefn}{Definition.}
\newtheorem{xproposition}{Proposition.}
\newtheorem{xcorollary}{Corollary.}
\newtheorem{xrem}{Remark.}
\newtheorem{xexm}{Example.}
\newtheorem{xlemma}{Lemma.}
\newtheorem{xtheorem}{Theorem.}
\newtheorem{xnotat}{Notation.}
\newtheorem{xnewpar}{\it}
\newtheorem{xproof}{{\it Proof. }}
\newtheorem{xproofof}{{\it Proof}}

\newenvironment{definition}{\begin{defn}\em}{\end{defn}}
\newenvironment{remark}{\begin{rem}\em}{\end{rem}}
\newenvironment{example}{\begin{exm}\em}{\end{exm}}

\newenvironment{proof}{\begin{xproof}\em}{\end{xproof}}

\newenvironment{newparagraph*}[1]{\begin{xnewpar}\hspace*{-1.5mm}{#1}. \rm}{\end{xnewpar}}

\newenvironment{definition*}{\begin{xdefn}\em}{\end{xdefn}}
\newenvironment{remark*}{\begin{xrem}\em}{\end{xrem}}
\newenvironment{example*}{\begin{xexm}\em}{\end{xexm}}
\newenvironment{notation*}{\begin{xnotat}\em}{\end{xnotat}}
\newenvironment{proposition*}{\begin{xproposition}}{\end{xproposition}}
\newenvironment{corollary*}{\begin{xcorollary}}{\end{xcorollary}}
\newenvironment{lemma*}{\begin{xlemma}}{\end{xlemma}}
\newenvironment{theorem*}{\begin{xtheorem}}{\end{xtheorem}}

\def\qed{\hspace{0.3cm}{\rule{1ex}{2ex}}}
\newcommand\V{\bigvee}
\newcommand\ie{i.e.}
\newcommand\eg{e.g.}
\newcommand\st{\mid}
\newcommand\cf{\textrm{cf.}}
\newcommand\downsegment{{\downarrow}}

\newcommand\spp{\varsigma}

\newcommand\Max{\operatorname{Max}}
\newcommand\pwset[1]{\wp(\,#1)}
\newcommand\opens{\operatorname{\mathcal{O}}}

\newcommand\topology{\operatorname{\Omega}}

\newcommand\Mod{\mathbf{Mod}}
\newcommand\Frm{\mathbf{Frm}}
\newcommand\Loc{\mathbf{Loc}}
\newcommand\GTop{\mathbf{GTop}}
\newcommand\Etale{\mathbf{LH}}

\newcommand\opp[1]{{#1}^{\textrm{op}}}

\newcommand\ident{\mathrm{id}}
\newcommand\ipi{\mathcal I}

\newcommand\CC{\mathbb{C}}
\newcommand\SL{\mathbf{SL}}

\newcommand\inner[2]{{\langle #1,#2 \rangle}}

\newcommand\hide[1]{}
\newcommand\qseq[2]{{\lbrack\!\lbrack} {#1}={#2}{\rbrack\!\rbrack}}
\newcommand\vect[1]{\boldsymbol{ #1}}
\newcommand\sets{\operatorname{\mathbf{Set}}}
\newcommand\sections{\mathit{\Gamma}}

\newcommand\hmb{\mathbf{HMB}}
\newcommand\sh{\mathbf{Sh}}

\newcommand\pb{\operatorname{{\times}_0}}
\newcommand\tensover{\operatorname{{\otimes}_0}}

\newcommand\Rel{\operatorname{\mathbf{Rel}}}
\newcommand\Map{\operatorname{\mathbf{Map}}}

\newcommand{\relto}{{\longrightarrow\hspace*{-2.8ex}{\mapstochar}\hspace*{2.6ex}}}

\newcommand\maps{\operatorname{Map}}

\newenvironment{eq}{\setcounter{equation}{\arabic{defn}}\begin{equation}}{\end{equation}\setcounter{defn}{\arabic{equation}}}
\newenvironment{eqarray}{\setcounter{equation}{\arabic{defn}}\begin{eqnarray}}{\end{eqnarray}\setcounter{defn}{\arabic{equation}}}

\begin{document}

\title{Groupoid sheaves as quantale sheaves\thanks{Research supported in part by Funda\c{c}\~{a}o para a Ci\^{e}ncia e a Tecnologia through FEDER.}
}

\author{\sc Pedro Resende}

\date{~}

\maketitle

\begin{abstract}
Several notions of sheaf on various types of quantale have been proposed and studied in the last twenty five years. It is fairly standard that for an involutive quantale $Q$ satisfying mild algebraic properties the sheaves on $Q$ can be defined to be the idempotent self-adjoint $Q$-valued matrices. These can be thought of as $Q$-valued equivalence relations, and, accordingly, the morphisms of sheaves are the $Q$-valued functional relations. Few concrete examples of such sheaves are known, however, and in this paper we provide a new one by showing that the category of equivariant sheaves on a localic \'etale groupoid $G$ (the classifying topos of $G$) is equivalent to the category of sheaves on its involutive quantale $\opens(G)$. As a means towards this end we begin by replacing the category of matrix sheaves on $Q$ by an equivalent category of complete Hilbert $Q$-modules, and we approach the envisaged example where $Q$ is an inverse quantal frame $\opens(G)$ by placing it in the wider context of stably supported quantales, on one hand, and in the wider context of a module theoretic description of arbitrary actions of \'etale groupoids, both of which may be interesting in their own right.\\
\vspace*{-2mm}~\\
\textit{Keywords:} Hilbert modules, sheaves, involutive quantales, \'etale group\-oids, groupoid actions, \'etendues.\\
\vspace*{-2mm}~\\
2010 \textit{Mathematics Subject
Classification}: 06D22, 06F07, 18B25, 18F20, 22A22, 54B40.
\end{abstract}

\newpage
{\small{\tableofcontents}}

\section{Introduction}\label{introduction}

By an involutive quantaloid is meant a sup-lattice enriched category (or sometimes a non-unital generalization of this) equipped with an order two sup-lattice enriched automorphism $(-)^*:Q^{\mathrm{op}}\to Q$. Involutive quantales are the involutive quantaloids with a single object. These structures can be regarded as generalized topological spaces in their own right, and several papers devoted to notions of sheaf on them have been published. In most of these the sheaves are, one way or the other, defined to be quantale-valued or quantaloid-valued matrices in a way that generalizes the notion of frame-valued set of \citelist{ \cite{Higgs} \cite{FS} } (see also \citelist{\cite{Borceux}*{Section 2} \cite{elephant}*{pp.\ 502--513}}). The appropriateness of such definitions of sheaf is emphasized by the fact that every Grothendieck topos is, up to equivalence, the category of sheaves on an involutive quantaloid of ``binary relations'' that is obtained directly from the topos or from a site \cite{Allegories,W82,CW,Pitts}. In essence, this fact is the source of all the concrete examples known so far of toposes of (set valued) sheaves on involutive quantaloids.

In this paper we give another example, based on the correspondence between \'etale groupoids and quantales \cite{Re07}, showing that the topos of equivariant sheaves on a localic \'etale groupoid $G$ is equivalent to the category of sheaves on the involutive quantale $\opens(G)$ of the groupoid. This provides a way, via the groupoid representation of toposes \citelist{ \cite{JT} \cite{elephant}*{\S C5}}, in which \'etendues arise as categories of sheaves on involutive quantales.

We shall begin with a somewhat detailed survey on quantaloid sheaves, meant both to provide the reader with a reasonable view of the literature and also to help trace the origins of the notion of sheaf that we shall consider in this paper.

\paragraph{Sheaves and Grothendieck toposes.}

Locales are a point-free substitute for topological spaces \cite{pointless,stonespaces}, in particular providing a generalization of sober topological spaces, such as Hausdorff spaces. However, in many ways this is only a mild generalization. A much broader one is given by sites, where suitable families of morphisms generalize the role of open covers. A Grothendieck topos is by definition the category (up to equivalence) of sheaves on a site \cite{maclanemoerdijk,elephant,Borceux}. Whereas locales are determined up to isomorphism by their toposes of sheaves, it is the case that very different sites may yield equivalent toposes, and it is the toposes, rather than the sites, that provide the right notion of generalized space. In particular, the category of locales $\Loc$ is equivalent to a reflective full subcategory of the category of Grothendieck toposes $\GTop$, and the
role played by toposes as spaces in their own right is emphasized by the existence of invariants such as homotopy and cohomology for them \cite{topostheory}. Indeed, the fact that toposes provide good grounds on which to study cohomology is precisely one the reasons that led Grothendieck to considering them \cite{SGA4}.

A ``measure'' of how far toposes really differ from locales is provided by the representation theorem of Joyal and Tierney \citelist{ \cite{JT} \cite{elephant}*{\S C5}}: a Grothendieck topos is, up to equivalence, the category of equivariant sheaves on an open groupoid $G$ in $\Loc$, in other words the sheaves on the locale of objects $G_0$ that in addition are equipped with an action of $G$. We shall refer to these simply as $G$-sheaves. In particular, every \'etendue can be represented by an \'etale groupoid \cite{JT}*{Th.\ VIII.3.3} (see also \citelist{ \cite{SGA4}*{\S IV.9.8.2} \cite{KoMoer}}). Similarly to the situation with sites, groupoids themselves should be regarded as presentations of spaces rather than spaces themselves, since two groupoids yield equivalent toposes if and only if they are Morita equivalent \cite{Moer88,Moer90}. For suitable open groupoids (the \'etale-complete ones, in particular the \'etale groupoids), the topos $BG$ of equivariant sheaves on a groupoid $G$ is called the classifying topos of $G$ because it classifies principal $G$-bundles \cite{Moer88,Bunge}. It can also be thought of as the space of orbits of $G$, since it arises as a colimit, in the appropriate bicategorical sense, of the nerve of $G$ regarded as a simplicial topos \cite{Moer88}.

A different way to represent Grothendieck toposes (and also more general categories) stems from work on ``categories of relations'' by various authors. For instance, Freyd's work on allegories \citelist{ \cite{Allegories} \cite{elephant}*{\S A3}} dates back to the seventies and already contains many ingredients of later theories of sheaves on involutive quantaloids. One is the equivalence of categories $\mathcal E\simeq\Map(\Rel(\mathcal E))$, where $\Rel(\mathcal E)$ is the involutive quantaloid of binary relations on a Grothendieck topos $\mathcal E$ and $\Map(\Rel(\mathcal E))$ its category of ``maps'', which are the arrows $f$ in $\Rel(\mathcal E)$ satisfying $ff^*\le 1$ and $f^*f\ge 1$. Another idea is the completion \cite[\S 2.226]{Allegories} that replaces a quantaloid by a suitable category of matrices that play the role of sheaves.

A different characterization of categories of relations is that of Carboni and Walters \cite{CW}, based on which Pitts \cite{Pitts} has obtained an adjunction between the 2-category of Grothendieck toposes and the 2-category of quantaloids known as distributive categories of relations (dcr's for short). The construction of the category of sheaves on a dcr by matrices is analogous to  completion for allegories in \cite[\S 2.226]{Allegories}. This also parallels the representation of toposes by localic groupoids: dcr's are another generalization of locales because a dcr is a sup-lattice enriched category $A$ equipped with a suitable product $A\otimes A\to A$ that turns it into a ``locale with many objects'' (a fancy name would be ``localoid'') in the sense that if $A$ has only one object then it is a locale and the product is $\wedge$ (we also note that involution and intersection are not primitive operations as in an allegory); every Grothendieck topos arises as the category of sheaves on a dcr.

Still another representation of Grothendieck toposes by involutive quantaloids can be found in the work of Walters \cite{W82}, which shows how to associate an involutive quantaloid $B$ to any site in such a way that the topos is equivalent to the category of sheaves on $B$. Here the notion of ``sheaf'' is that of skeletal Cauchy-complete $B$-enriched category. Similarly to the previous examples, such sheaves can again be regarded as matrices and again they generalize frame-valued sets, but there are some technical differences, in particular the fact that they must lie above identity matrices. This difference can be quickly described in the case of sheaves on a locale $A$: whereas $A$-valued sets directly model sheaves on $A$, Walters' approach uses categories enriched in the quantaloid $B$ that arises as the split idempotent completion of $A$ \cite{W81}.

\paragraph{Quantales and noncommutative topology.}

Quantales surface in a wide range of mathematical subjects, including algebra, analysis, geometry, topology, logic, physics, computer science, etc. Technically they are just sup-lattice ordered semigroups, but the name ``quantale'' (coined by Mulvey in the eighties) is associated with the idea of ``quantizing'' the point free spaces of locale theory, in analogy with the generalization of Hausdorff topology via noncommutative C*-algebras \cite{Giles,GK,Akem70,Akem71,Connes}. In particular, Mulvey's original idea was precisely to define the ``non-commutative spectrum'' of a C*-algebra to be a suitable quantale \cite{M86}. Several variants of this idea have been pursued for C*-algebras \cite{BRB,Rosicky,MP1,MP2,KPRR,KR} and arbitrary rings \cite{Rose92,Rosenthal1,BB90,BC94}, often involving classes of quantales that satisfy specific properties, along with purely algebraic investigations of the spatial aspects of quantales \cite{K02,PR}. More recent examples include the space of Penrose tilings \cite{MR} and groupoids \cite{Re07}.

The idea that quantales can be regarded as spaces leads naturally to the question of how such spaces can be studied, for instance via which cohomology theories, etc. While there has never been a systematic pursuit in this direction, there has nevertheless been some effort aimed at finding good notions of sheaf on quantales or, more generally, on quantaloids.
This effort is justified by a number of reasons. One, of course, is the possibility of using sheaves in order to define topological invariants for quantales and for the objects they are associated with. Another motivation for looking at quantale or quantaloid sheaves is the role they may play as structure sheaves when studying spectra of noncommutative rings or C*-algebras \cite{BC94,BB90,VdB}; in particular, there is interest in understanding more about the sheaves on the quantale $\Max A$ of a C*-algebra $A$ due to the hope that a suitable notion of structure sheaf may do away with the excess of quantale homomorphisms that make $\Max$ a somewhat ill-behaved functor \cite{KR}. Also interesting is the possibility of obtaining useful extensions of the {\it K}-theory sheaves of \cite{Dadarlat} in the context of the program of classification of C*-algebras.

Despite a few exceptions \cite{BBSV,BB86,BC93}, most of the papers on sheaves for quantales or quantaloids are to a greater or lesser extent based on the definition of sheaves as matrices (frame-valued sets) introduced originally for sheaves on locales. Many technicalities depend, of course, on the specific types of quantales or quantaloids under consideration. For instance, there have been direct generalizations of frame-valued sets for right-sided or left-sided quantales \cite{NawazPhD,MN,BBS,G94}, which are those that model quantales of right-sided (resp.\ left-sided) ideals of rings. There have also been proposals for definitions of sheaf on general quantaloids \cite{G00,VdB,Rosenthal2,G95} that carry a generalization of frame-valued sets, but it has been shown by Borceux and Cruciani \cite{BC98} that such definitions applied to general non-involutive quantales should at most give us a notion of ordered sheaf, rather than a discrete one. Subsequent work by Stubbe \cite{IsarPhD,S05} adheres to this point of view while also generalizing the bicategorical enrichment approach of Walters \cite{W81}.

For (discrete) sheaves one needs the base quantale or quantaloid $Q$ to be equipped with an involution in order to be able to define self-adjoint $Q$-valued matrices (equivalently, in order to define a ``symmetric'' notion of $Q$-valued equality). For instance, the quantaloids that appear in the early works on categories of relations mentioned above are involutive, and the matrices that correspond to sheaves on them are self-adjoint. For more general involutive quantales and quantaloids a fairly stable theory of sheaves seems to be emerging, with many basic definitions being close in spirit, if not in form. For instance, sheaves on involutive quantaloids have been studied by Gylys \cite{G01}. Slightly later Garraway \cite{Garraway,GarrawayPhD} extended the theory of sheaves on the dcr's of \cite{Pitts} to a rather general class of non-unital quantaloids, while Mulvey and Ramos \cite{MulJoel,JoelPhD} have produced a theory for involutive quantales directly inspired by the axiomatic approach of \cite{FS,MN}. More recently, Heymans \cite{Heymans} has provided a study of sheaves based on quantaloid enriched categories in the style of Walters \cite{W81,W82}, making the connections to \cite{G01} explicit and leading to a representation theorem for Grothendieck toposes by so-called Grothendieck quantales \cite{HeymansPhD}.

While the theory of sheaves on involutive quantaloids appears to be thriving, a negative aspect should nevertheless be mentioned, namely that so far the increase in generality of the theory has not been accompanied by a corresponding rise in the number of known examples.

\paragraph{Overview of the paper.}

Each \'etale groupoid $G$ has an associated involutive quantale $\opens(G)$ \cite{Re07} (for a topological groupoid this quantale is just the topology equipped with pointwise multiplication of groupoid arrows), and the natural question of how the $G$-sheaves relate to notions of sheaf on $\opens(G)$ arises. The main aim of this paper is to provide an answer to this question. As we shall see, the conclusion is that the ``standard'' category of matrix sheaves on an involutive quantale such as $\opens(G)$ (by which we shall mean the category of $\opens(G)$-sets as, say, in \cite{Garraway}) and the classifying topos $BG$ are isomorphic. Hence, \'etale groupoid sheaves yield a new example of matrix sheaves on involutive quantales. 

We shall proceed in three steps:

\begin{description}
\item[Step 1:] We show, in section \ref{sec:gaqm}, how the actions of an open or \'etale groupoid $G$ can be described in terms of modules on the quantale $\opens(G)$. The main result of this section is theorem \ref{lem:fullfaithfullh}, which proves that the category $G$-$\Loc$ of actions of an \'etale groupoid $G$ is equivalent to a suitable category of modules on $\opens(G)$, whose algebraic description is quite simple. As a restriction of this we obtain a definition of sheaf on $\opens(G)$ in terms of quantale modules, and two categories of quantale modules, $\opens(G)$-$\Etale$ and $\opens(G)$-$\sh$, which are isomorphic to $BG$.
\item[Step 2:] We shall recall the basics of the theory of quantale-valued sets and show that these can be replaced by the theory of Hilbert modules \cite{Paseka,Paseka2} equipped with Hilbert bases in a way that generalizes the work of \cite{RR} for sheaves on locales. This is the contents of section \ref{sec:mvhb}, whose main result is theorem \ref{equivrelQQhmb}, from which it follows, for an arbitrary involutive quantale $Q$, that the category $\sets(Q)$ of $Q$-sets is equivalent to the category of maps of the involutive quantaloid $Q$-$\hmb$ of Hilbert $Q$-modules with Hilbert bases. 
\item[Step 3:] Finally, in section \ref{sec:backgrpdshvs} we show that the objects of the categories $\opens(G)$-$\Etale$ and $\opens(G)$-$\sh$ coincide with the Hilbert $\opens(G)$-modules with Hilbert bases (theorems \ref{thm:Qsheaf} and \ref{thm:sheaveseqetales}). Hence, in particular, $\opens(G)$-$\sh$ coincides with the category of maps of $\opens(G)$-$\hmb$ (lemma \ref{lem:main}), and therefore $BG$ is equivalent to $\opens(G)$-$\sets$ (theorem \ref{thm:main}).
\end{description}

We hope these results provide further evidence of what should be considered a ``good'' notion of sheaf for involutive quantales and quantaloids, and we provide a brief discussion of this at the end of section \ref{sec:end}.

Besides the main results, the paper contains subsidiary aspects of independent interest: we provide a comparison, in section \ref{sec:supp}, between supported quantales and modular quantales; in section \ref{sec:actions}, a corollary of our results is a proof of the multiplicativity of inverse quantal frames that is simpler than the original one in \cite{Re07} --- in particular not using the representation of inverse quantal frames by inverse semigroups; and, in order to convey a sense of the robustness of the notion of quantale sheaf that we assume in this paper we include, as an appendix (section \ref{sec:shmat}), a brief survey of some of the variants of quantale-valued set that can be found in the literature along with the relation between complete quantale-valued sets and Hilbert modules.

\section{Background}

\subsection{Preliminaries}

The purpose of this section is mostly to recall some definitions and examples concerning locales, localic groupoids and involutive quantales and quantaloids, and to set up notation and terminology.

\paragraph{Locales.}

We shall adopt the same conventions regarding notation and terminology for locales that are used in \cite{Re07}. In particular, following \cite{stonespaces}, we shall denote the category of \emph{frames} and \emph{frame homomorphisms} by $\Frm$. We shall adopt the terminology \emph{locale} instead of frame when referring to objects of the dual category $\opp\Frm$, which we denote by $\Loc$ and whose arrows we refer to as \emph{continuous maps}, or simply \emph{maps}, of locales. If $X$ is a locale we shall usually write $\opens(X)$ for the same locale regarded as an object of $\Frm$.
If $f:X\to Y$ is a map of locales we shall refer to the frame homomorphism $f^*:\opens(Y)\to\opens(X)$ that defines it as its \emph{inverse image}. If $f$ is an \emph{open map}, the left adjoint to $f^*$ is referred to as the \emph{direct image} of $f$ and it is denoted by $f_!:\opens(X)\to\opens(Y)$. The product of $X$ and $Y$ in $\Loc$ is denoted by $X\times Y$. It coincides with the coproduct of $\opens(X)$ and $\opens(Y)$ in $\Frm$, which is the tensor product in the category of sup-lattices $\SL$ \cite[\S I.5]{JT}. Hence, we write $\opens(X\times Y)=\opens(X)\otimes\opens(Y)$.

\paragraph{Groupoids.} A \emph{groupoid} in a category $C$ with enough pullbacks is an internal groupoid in $C$. We denote the locales of objects and arrows of a groupoid $G$ respectively by $G_0$ and $G_1$, and we adopt the following notation for the structure maps
\[G\ \ \ \ =\ \ \ \ \xymatrix{
G_2\ar[rr]^-m&&G_1\ar@(ru,lu)[]_i\ar@<1.2ex>[rr]^r\ar@<-1.2ex>[rr]_d&&G_0\ar[ll]|u
}\;,\]
where $G_2$ is the pullback of the \emph{domain} and \emph{range} maps:
\[\vcenter{\xymatrix{G_2\ar[r]^{\pi_1}\ar[d]_{\pi_2}&G_1\ar[d]^r\\
G_1\ar[r]_d&G_0}}\;.\]
We remark that, since $G$ is a groupoid rather than just an internal category, the \emph{multiplication map} $m$ is the pullback of $d$ along itself:
\[\vcenter{\xymatrix{G_2\ar[r]^{\pi_1}\ar[d]_{m}&G_1\ar[d]^d\\
G_1\ar[r]_d&G_0}}\;.\]
The following are examples:
\begin{itemize}
\item A \emph{topological groupoid} is an internal groupoid in the category of topological spaces and continuous maps.
\item A \emph{Lie groupoid} is an internal groupoid in the category of smooth manifolds such that $d$ is a submersion (this condition ensures that the pullback $G_2$ exists).
\item The category $\Loc$ has pullbacks and a \emph{localic groupoid} is an internal groupoid in $\Loc$.
\end{itemize}
A localic groupoid $G$ is said to be \emph{open} if $d$ is an open map. Hence, if $G$ is open the multiplication map $m$ is also an open map. An \emph{\'etale groupoid} is an open groupoid such that $d$ is a local homeomorphism, in which case all the structure maps are local homeomorphisms and, hence, $G_0$ is isomorphic to an open sublocale of $G_1$. Conversely, any open groupoid for which $u$ is an open map is necessarily \'etale \cite[Corollary 5.12]{Re07}. Similar conventions and remarks apply to topological groupoids.

\paragraph{Involutive quantales.} By an \emph{involutive quantale} is meant an involutive semigroup in the monoidal category $\SL$ of sup-lattices. We shall adopt the following terminology and notation:

\begin{itemize}
\item The product of two elements $a$ and $b$ of an involutive quantale $Q$ is denoted by $ab$, the involute of $a$ is denoted by $a^*$, the join of a subset $S\subset Q$ by $\V S$, the top element by $1_Q$ or simply $1$, and the bottom element by $0_Q$ or simply $0$. The elements such that $a^*=a$ are \emph{self-adjoint}. The idempotent self-adjoint elements are the \emph{projections}.

\item The involutive quantale $Q$ is \emph{unital} if there is a unit for the multiplication, which is denoted by $e_Q$ or simply $e$. (This is necessarily a projection.)

\item By a \emph{homomorphism} of involutive quantales $h:Q\to R$ is meant a homomorphism of involutive semigroups in $\SL$. If $Q$ and $R$ are unital, the homomorphism $h$ is \emph{unital} if $h(e_Q)=e_R$.
\end{itemize}

Similarly, given an involutive quantale $Q$, by a \emph{(left) $Q$-module} will be meant a sup-lattice $M$ equipped with an associative left action $Q\otimes M\to M$ in $\SL$ (the involution of $Q$ plays no role). The action will be assumed to be unital whenever $Q$ is. The notations for joins, top, bottom, are similar to those of quantales themselves, and the action of an element $a\in Q$ on $x\in M$ is denoted by $ax$. By a \emph{homomorphism} of left $Q$-modules $h:M\to N$ is meant a $Q$-equivariant homomorphism of sup-lattices.

\begin{example}\label{quantaleexamples}
The following are examples of involutive quantales:
\begin{enumerate}
\item Any frame $L$ is a unital involutive quantale with $e=1$, trivial involution and multiplication $ab=a\land b$.
\item Let $G$ be a topological groupoid.
If $G$ is open the topology $\topology(G_1)$ is an involutive quantale with the product of two open sets $U$ and $V$ being given by the pointwise multiplication of groupoid arrows:
\[UV=m(U\times_{G_0} V)=\{m(x,y)\st x\in U,\ y\in V,\ r(x)=d(y)\}\;;\]
and the involute of an open set $U$ is its pointwise inverse $U^*=i(U)$. The quantale is unital if and only if $G$ is an \'etale groupoid, in which case the unit $e$ coincides with the set of unit arrows $u(G_0)$.
Analogous facts apply to localic groupoids (see section \ref{sec:supp}).
\item In particular, the topology of any topological group is an involutive quantale, and the quantale is unital if and only if the group is discrete.
\item Another particular example is $\pwset{X\times X}$, the quantale of binary relations on the set $X$ \cite{MP0}, which is the discrete topology of the greatest equivalence relation on $X$, sometimes referred to as the ``pair groupoid'' on $X$.
\item Let $A$ be a C*-algebra. The set of closed (under the norm topology) linear subspaces of $A$ is an involutive quantale $\Max A$ \cite{Curacao,Mulvey-enc,MP1}. The multiplication of two closed linear subspaces is the closure of the linear span of their pointwise product. The involute of a closed linear subspace if its pointwise involute. The quantale is unital if $A$ has a unit $\mathbf 1$, in which case $e$ is the linear span $\CC \mathbf 1$.
\end{enumerate}
\end{example}

\paragraph{Involutive quantaloids.} Quantaloids are the many objects generalization of quantales. However, although at odds with our terminology for involutive quantales, we shall not need to consider quantaloids without units:

\begin{definition}\label{def:quantaloid}
\begin{enumerate}
\item By a \emph{quantaloid} is meant a sup-lattice enriched category.
\item An \emph{involutive quantaloid} is a quantaloid $\mathcal Q$ equipped with a contravariant sup-lattice enriched isomorphism $(-)^*:\opp{\mathcal Q}\to\mathcal Q$ which is both the identity on objects and its own inverse.
\item An involutive quantaloid $\mathcal Q$ is \emph{modular} if any arrows $a,b,c\in\mathcal Q$ satisfy the \emph{modularity axiom} of Freyd whenever the compositions are defined:
\begin{eq}\label{freydmod}
ab\wedge c\le a(b\wedge a^* c)
\end{eq}%
\end{enumerate}
\end{definition}

The category of sets with relations as morphisms, $\Rel$, is the prototypical example of a modular quantaloid: the morphisms $R:X\to Y$ are the relations $R\subset Y\times X$; and the (total) functions $f:X\to Y$ can be identified with the relations $R:X\to Y$ such that $RR^*\subset\Delta_Y$ and $R^*R\supset\Delta_X$; that is, $R^*$ is right adjoint to $R$ (equivalently, $R$ has a right adjoint, which is necessarily $R^*$). This justifies the following notation and terminology:

\begin{definition}\label{mapsininvqs}
Let $\mathcal Q$ be an involutive quantaloid.
\begin{enumerate}
\item A \emph{map} is a morphism $f:x\to y$ which is left adjoint to $f^*$; that is, $ff^*\le\ident_y$ and $f^*f\ge\ident_x$. The map $f$ is \emph{injective} if it further satisfies $f^*f=\ident_x$, and \emph{surjective} if it satisfies $ff^*=\ident_y$.
If the map $f$ is both injective and surjective we say that it is \emph{unitary}.
\item The subcategory of $\mathcal Q$ containing the same objects as $\mathcal Q$ and the maps as morphisms is denoted by $\maps(\mathcal Q)$.
\end{enumerate}
\end{definition}

\begin{remark}
It is more or less standard, in the case of non-involutive quantaloids, to use the terminology \emph{map} for a morphism that has a right adjoint. This does not coincide, in general, with the above definition in the case of an involutive quantaloid, but for modular quantaloids the definitions coincide \cite[Th.\ 2.2]{Garraway}.
\end{remark}

\begin{definition}\label{quantaloidequiv}
Two involutive quantaloids $\mathcal Q$ and $\mathcal R$ are \emph{equivalent} if there exist two sup-lattice enriched and involution preserving functors (in other words, two \emph{homomorphisms} of involutive quantaloids)
\[
\xymatrix{
\mathcal Q\ar@/^/[r]^F&\mathcal R\ar@/^/[l]^G
}
\]
such that $G\circ F$ and $F\circ G$ are naturally isomorphic to $\ident_{\mathcal Q}$ and $\ident_{\mathcal R}$, respectively, via \emph{unitary} natural isomorphisms (\ie, natural isomorphisms whose components are unitary maps).
\end{definition}

It is an easy exercise to show that an adjoint equivalence (of categories) $F\dashv G$ between the involutive quantaloids $\mathcal Q$ and $\mathcal R$ is an equivalence in the stronger sense just defined if $G$ is a homomorphism of involutive quantaloids and the unit of the adjunction is unitary (equivalently, $F$ is a homomorphism and the counit is unitary).

\subsection{Supported quantales}\label{sec:supp}

The quantales which are associated to \'etale groupoids are the inverse quantal frames. They are instances of the more general and algebraically well behaved class of stable quantal frames, which in turn is included in the equally well behaved class of stably supported quantales. We begin by recalling some properties of these quantales, following \cite{Re07}, and we study their relation to modularity.

\paragraph{Groupoid quantales.}

Let us recall a few aspects of the correspondence between localic groupoids and quantales. Let $G$ be an open localic groupoid.
Since the multiplication map $m$ is open, there is a sup-lattice homomorphism defined as the following composition (in $\SL$):
\[\xymatrix{\opens(G_1) \otimes(G_1)\ar@{->>}[r]& \opens(G_2)\ar[r]^-{m_!}& \opens(G_1)}\;.\]
This defines an associative multiplication on $\opens(G_1)$ which together with the isomorphism \[\opens(G_1)\stackrel{i_!}\to \opens(G_1)\] makes $\opens(G_1)$ an involutive quantale. This quantale is denoted by $\opens(G)$ --- it is the ``opens of $G$''.

The involutive quantale $\opens(G)$ of an open groupoid $G$ is unital if and only if $G$ is \'etale \cite[Corollary 5.12]{Re07}, in which case the unit is $e=u_!(1)$ and $u_!$ defines an order-isomorphism
\[u_!:\opens(G_0)\stackrel\cong\longrightarrow
\downsegment(e)=\{a\in \opens(G)\st a\le e\}\;.\]
Hence, in particular, $\downsegment(e)$ is a frame (\cf\ \ref{def:baselocale}).

\paragraph{Stably supported quantales.} Let $Q$ be a unital involutive quantale.
We recall that by a \emph{support} on $Q$ is meant a sup-lattice homomorphism $\spp:Q\to Q$ satisfying the following conditions for all $a\in Q$:
\begin{eqarray}
\spp(a)&\le& e\label{spp1}\\
\spp(a)&\le& aa^*   \label{spp2}\\
a&\le&\spp(a)a\;. \label{spp3}
\end{eqarray}%
The support is said to be \emph{stable}, and the quantale is \emph{stably supported}, if in addition we have, for all $a,b\in Q$:
\begin{eq}\label{spp4}
\spp(ab)=\spp(a\spp(b))\;.
\end{eq}%
\begin{example}
The quantale $\opens(G)$ of an \'etale groupoid $G$ is stably supported, and the support is given by
$\spp=u_!\circ d_!:\opens(G)\to\opens(G)$ (\cf\ proof of \cite[Theorem 5.11]{Re07}).
\end{example}

For any quantale $Q$ with a support, the following equalities hold for all $a,b\in Q$ \cite[Lemma 3.3(12)]{Re07},
\begin{eqarray}
\spp(a)1 &=& a1\;,\label{spp5}\\
\spp(b)&=&b\ \ \ \ \textrm{if }b\le e\;,\label{spp5new}
\end{eqarray}%
and the unital involutive subquantale $\downsegment(e)=\{a\in Q\st a\le e\}$ is a base locale in the following sense \cite[Lemma 3.3]{Re07}:

\begin{definition}\label{def:baselocale}
Let $Q$ be a unital involutive quantale and let $B=\downsegment(e)$. We say that $B$ is a \emph{base locale} for $Q$ if $b=b^*$ and $bc=b\land c$ for all $b,c\in B$. ($B$ is necessarily a frame, by \cite[\S III.1]{JT}.)
\end{definition}

We further recall \cite[Lemma 3.4 and Theorem 3.8]{Re07} that any stably supported quantale $Q$ admits a unique support, which is given by the following formulas,
\begin{eqarray}
\spp(a) &=& a1\land e\:,\label{spp6}\\
\spp(a) &=& aa^*\land e\;,\label{spp7}
\end{eqarray}%
and, moreover, a support is stable if and only if
\begin{eq}\label{spp8}
\spp(a1)\le\spp(a)
\end{eq}%
for all $a\in Q$.

It has also been proved \cite[Lemma 3.4-5]{Re07} that, if $Q$ is a stably supported quantale, then
\begin{eq}\label{spp9}
ba=b1\wedge a
\end{eq}%
for all $b\in B$ and $a\in Q$, from which the following useful property follows:

\begin{lemma}\label{prop:sqf}
Let $Q$ be a stably supported quantale, and let $a,b\in Q$ with $b\le e$. Then
$ba\wedge e = b\wedge a$.
\end{lemma}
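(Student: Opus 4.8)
The plan is to derive the identity directly from the two facts recalled just above: the equality $ba = b1\wedge a$ of (\ref{spp9}), which is available because $b\le e$ places $b$ in the locale $B=\downsegment(e)$, and the formula $\spp(a)=a1\wedge e$ of (\ref{spp6}). First I would meet both sides of (\ref{spp9}) with $e$, obtaining
\[
ba\wedge e = (b1\wedge a)\wedge e = (b1\wedge e)\wedge a\;,
\]
where the second equality simply reassociates the binary meet in the complete lattice $Q$. Then I would identify $b1\wedge e$ with $\spp(b)$ via (\ref{spp6}), and use $\spp(b)=b$ (valid since $b\in B$, where $\spp$ restricts to the identity), so that the right-hand side collapses to $b\wedge a$, which is the claim.

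I do not anticipate any real obstacle: the proof is essentially a one-line computation. The only points deserving a moment's care are that the hypothesis $b\le e$ is exactly what is needed to apply (\ref{spp9}) (stated for $b\in B$ and arbitrary $a\in Q$) and to invoke $\spp(b)=b$, and that all the meets in sight are taken in $Q$, which, being a sup-lattice, is a complete lattice and hence admits arbitrary meets.
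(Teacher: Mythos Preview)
Your proof is correct and essentially the same as the paper's. The only cosmetic difference is in how you identify $b1\wedge e$ with $b$: you invoke (\ref{spp6}) and $\spp(b)=b$, whereas the paper applies (\ref{spp9}) once more with $a=e$ to get $b1\wedge e=be=b$.
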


\begin{proof}
$ba\wedge e = (b1\wedge a)\wedge e = (b1\wedge e)\wedge a=be\wedge a=b\wedge a$. \qed
\end{proof}

Furthermore, every stable support is $B$-equivariant, because for all $a\in Q$ and $b\in B$ we have, by (\ref{spp5new}), $\spp(ba)=\spp(b\spp(a))=b\spp(a)$,
and in fact equivariance is equivalent to stability (\cf\ \ref{previouslyunnoticed}):

\begin{theorem}\label{thm:stabiffequiv}
Let $Q$ be a supported quantale.
The support of $Q$ is stable if and only if it is a homomorphism of $B$-modules.
\end{theorem}

\begin{remark}\label{rem:weakspp}
All the above properties of supports and stable supports still hold if the definition of support is weakened by requiring supports to be only monotone instead of join-preserving. In particular, \ref{thm:stabiffequiv} still holds because if $\spp:Q\to B$ is a monotone $B$-equivariant map satisfying (\ref{spp2})--(\ref{spp3}) then it is left adjoint to the assignment $(-)1:B\to Q$, and hence it preserves joins. Hence, the exact definition of support is irrelevant as far as stable supports are concerned.
\end{remark}

\paragraph{Inverse quantal frames.} By a \emph{stable quantal frame} is meant a stably supported quantale which is also a frame. The following condition holds for all stable quantal frames \cite[Lemma 4.17(28)]{Re07} and will be used in the proof of \ref{thm:Qsheaf}:
\begin{eq}\label{sqfprop}
(a\wedge e)1 \ge \V_{yz^*\le a} y\wedge z\;.
\end{eq}%

An \emph{inverse quantal frame} is a stable quantal frame $Q$ that satisfies the \emph{cover condition}
\[\V\ipi(Q)=1\;,\]
where $\ipi(Q)=\{s\in Q\st ss^*\vee s^* s\le e\}$ is the set of \emph{partial units} of $Q$. This set, equipped with the multiplication of $Q$, has the structure of a complete and infinitely distributive inverse monoid whose inverses are given by the involution of the quantale \cite[Corollary 3.26]{Re07}. We remark that we have
\begin{eq}\label{sppIQ}
\spp(s)=ss^*\ \ \ \ \textrm{for all }s\in\ipi(Q)\;.
\end{eq}%

The inverse quantal frames $Q$ are precisely the quantales of the form $Q\cong\opens(G)$ for a localic \'etale groupoid $G$ \cite[Theorem 4.19 and Theorem 5.11]{Re07}.

\begin{example}\label{exmtopologicalgroupoid}
For the sake of illustration, let us describe this correspondence in the case of a topological \'etale groupoid $G$. The topology $\topology(G_1)$ is an inverse quantal frame (\cf\ \ref{quantaleexamples}) whose support is given by $\spp(U)=u(d(U))$ for all open sets $U$ of $G_1$.
By a \emph{local bisection} of $G$ is meant a continuous local section $s$ of $d$ such that $r\circ s$ is an open embedding of the domain of $s$ into $G_0$, and the partial units are precisely the images of the local bisections. Equivalently, a partial unit is the same as an open set $U\in\topology(G_1)$ such that the restrictions $d\vert_U$ and $r\vert_U$ are injective. In particular, the partial units of the quantale of binary relations $\pwset{X\times X}$ on a set $X$ are the partial bijections on $X$.
\end{example}

\paragraph{Modular quantales.}

The notion of modularity of Freyd (\cf\ \ref{def:quantaloid}) is crucial in his characterization of abstract quantaloids of binary relations. Similarly, the existence of stable supports provides us with a definition of what may be meant by an abstract quantale of binary relations, as in \cite{MarcR}. We are thus provided with two natural ways of abstracting quantales of binary relations, and it is worth comparing them. In addition, the fact that inverse quantal frames are modular (\cf\ \ref{thm:iqfaremod}) will play a role at the end of section \ref{sec:backgrpdshvs}.

As a first step we see that stably supported quantales are more general than modular quantales:

\begin{theorem}
Every modular quantale is stably supported.
\end{theorem}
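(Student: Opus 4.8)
The plan is to exhibit an explicit support on a modular quantale $Q$ and to check that it is stable. As the candidate I take the map $\spp\colon Q\to B$ defined by $\spp(a)=a1\wedge e$; it manifestly lands in $B=\downsegment(e)$ and is monotone, so it remains to verify (\ref{spp2}), (\ref{spp3}), preservation of joins, and stability. Condition (\ref{spp2}) is immediate from the modularity axiom (\ref{freydmod}): putting $(a,1,e)$ in place of $(a,b,c)$ gives $a1\wedge e\le a(1\wedge a^*e)=a(1\wedge a^*)=aa^*$, using that $e$ is a unit and $a^*\le1$.

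For (\ref{spp3}), $a\le\spp(a)a$, I would first apply (\ref{freydmod}) with $(a,e,a)$ to obtain $a=ae\wedge a\le a(e\wedge a^*a)$. Applying the involution — which is an order isomorphism, hence preserves binary meets, and fixes the self-adjoint element $e\wedge a^*a$ — yields $a^*\le(e\wedge a^*a)a^*$, and substituting $a^*$ for $a$ gives $a\le(e\wedge aa^*)a$. Since $aa^*\le a1$ we have $e\wedge aa^*\le a1\wedge e=\spp(a)$, whence $a\le(e\wedge aa^*)a\le\spp(a)a$.

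Next, join-preservation, which I expect to be the one point requiring a little care, since a direct computation would call for a distributivity law that a general modular quantale need not satisfy. Instead I would show that $\spp$ is left adjoint to $(-)1\colon B\to Q$; being a left adjoint between complete lattices it then preserves all joins. Indeed, if $\spp(a)\le b$ then $a\le\spp(a)a\le ba\le b1$ by (\ref{spp3}) and $a\le1$; conversely, if $a\le b1$ then $\spp(a)\le\spp(b1)$ by monotonicity, and $\spp(b1)=(b1)1\wedge e=b1\wedge e$ since $1\cdot1=1$, while one more application of (\ref{freydmod}) with $(b,1,e)$ gives $b1\wedge e\le b(1\wedge b^*e)=bb^*\le be=b$ (using $b^*\le e$ because $b\le e$), so $\spp(a)\le b$. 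One could alternatively check $B$-equivariance of $\spp$ by the same modularity manipulation and appeal to Remark~\ref{rem:weakspp}.

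It remains to see that the support $\spp$ so obtained is stable. By the criterion recalled in (\ref{spp8}) this amounts to $\spp(a1)\le\spp(a)$ for all $a$, and here in fact $\spp(a1)=(a1)1\wedge e=a1\wedge e=\spp(a)$ because $1\cdot1=1$. Thus $Q$ carries a stable support and is stably supported. Beyond the adjointness argument for join-preservation, every step is a short manipulation of the modularity axiom together with the elementary identities $a^*e=a^*$, $a^*\le1$ and $1\cdot1=1$ valid in any unital involutive quantale.
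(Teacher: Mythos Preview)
Your proof is correct. It differs from the paper's in one essential choice: you define the candidate support by $\spp(a)=a1\wedge e$, whereas the paper uses $\spp(a)=aa^*\wedge e$. These are the two canonical formulas for a stable support (equations (\ref{spp6}) and (\ref{spp7})), and each trades one verification for another. With the paper's choice, (\ref{spp2}) is immediate and (\ref{spp3}) is a single application of the modularity law in the form $(c\wedge ba^*)a\ge ca\wedge b$, while stability requires a further use of modularity to show $a1a^*\wedge e\le aa^*\wedge e$. With your choice, stability becomes the triviality $(a1)1=a1$, but in exchange (\ref{spp2}) needs one modularity step and (\ref{spp3}) takes the detour through $e\wedge aa^*$ that you describe. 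Both routes are short; yours has the pleasant feature that stability is free.

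Your adjointness argument for join-preservation is correct and is a genuine addition: the paper does not prove sup-preservation at all, relying instead on Remark~\ref{rem:weakspp} (monotonicity suffices once stability is in hand). You note this alternative yourself. Either way the argument goes through.
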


\begin{proof}
An involutive quantale $Q$ is modular (\cf\ \ref{def:quantaloid}) if for all $a,b,c\in Q$ we have
\begin{eq}\label{modularity1}
a(a^*b\wedge c)\ge b\wedge ac
\end{eq}%
or, equivalently, for all $a,b,c\in Q$ we have
\begin{eq}\label{modularity2}
(c\wedge ba^*)a\ge ca\wedge b\;.
\end{eq}%
Let then $Q$ be modular, and define the operation $\spp:Q\to B$ by
\[\spp(a)=aa^*\wedge e\;.\]
This operation is monotone, and in order to see that it is a stable support we check that it satisfies the required three laws, namely (\ref{spp2})--(\ref{spp3}) and stability. Whereas (\ref{spp2}) holds almost by definition, (\ref{spp3}) follows from a direct application of (\ref{modularity2}):
\[\spp(a)a=(aa^*\wedge e)a\ge a\wedge ea=a\;.\]
And we obtain stability by a direct application of (\ref{modularity1}):
\[\spp(a1)=a1a^*\wedge e=a1a^*\wedge e\wedge e\le a(1a^*\wedge a^*)\wedge e=aa^*\wedge e=\spp(a)\;.\qed\]
\end{proof}

The two notions do not coincide, however, as the following example due to Jeff Egger shows:

\begin{example}\label{exm:ssqnotmod}
Let $Q$ be the 8-element boolean algebra with atoms $a$, $b$, $c$, equipped with the trivial involution and the following multiplication:
\[
\begin{array}{c|cccccccc}
   & 0 & a & b & c & x & y & z & 1\\
\hline0 & 0 & 0 & 0 & 0 & 0 & 0 & 0 & 0\\
a & 0 & a & b & c & x & y & z & 1\\
b & 0 & b & 1 & 1 & 1 & 1 & 1 & 1\\
c & 0 & c & 1 & 1 & 1 & 1 & 1 & 1\\
x & 0 & x & 1 & 1 & 1 & 1 & 1 & 1\\
y & 0 & y & 1 & 1 & 1 & 1 & 1 & 1\\
z & 0 & z & 1 & 1 & 1 & 1 & 1 & 1\\
1 & 0 & 1 & 1 & 1 & 1 & 1 & 1 & 1
\end{array}
\]
Defining $\spp(q)=a$ for all $q\neq 0$ we obtain a stable support, but $Q$ is not modular because 
$bc\wedge a = 1\wedge a = a$ and $b(c\wedge ba) = b(c\wedge b) = b0 = 0$.
\end{example}

In this example the quantale is also a frame. Hence, modularity is stronger than being stably supported even for quantal frames. In turn, every inverse quantal frame is necessarily modular, as has been mentioned in \cite{HS2} by taking into account the representation of inverse quantal frames by inverse semigroups of \cite{Re07}. A direct proof is the following:

\begin{theorem}\label{thm:iqfaremod}
Every inverse quantal frame is modular, but not every modular quantal frame is an inverse quantal frame.
\end{theorem}

\begin{proof}
Let $Q$ be an inverse quantal frame, and let $a,b,c\in Q$.
Let $a=\V_i s_i$, $b=\V_j t_j$ and $c=\V_k u_k$, where $s_i$, $t_j$ and $u_k$ are partial units for all $i,j,k$. We have
\[a(b\wedge a^*c)=\V_{i,j,k,\ell} s_i(t_j\wedge s_\ell^* u_k)\ge 
\V_{i,j,k} s_i(t_j\wedge s_i^* u_k)=\V_{i,j,k} s_i t_j\wedge s_i s_i^* u_k\]
and
\[
ab\wedge c= \V_{i,j,k} s_i t_j\wedge u_k\;,
\]
and modularity follows from the equality
$s_i t_j\wedge s_i s_i^* u_k=s_i t_j\wedge u_k$. An example showing that not every modular quantal frame is an inverse quantal frame is the four-element quantale $R$ of \ref{exm:hilbsec3}. \qed
\end{proof}

\subsection{Locale sheaves as modules}\label{sec:localesheavesasmodules}

Sheaves on locales can be described as quantale modules (on locales) in more than one way. With the exception of \ref{lem:firsthilbertsections}, all the statements that follow are recalled from \cite{RR}, whose terminology we follow.

\paragraph{Maps as modules.}
If $p:X\to B$ is a map of locales then $\opens(X)$ is an $\opens(B)$-module by change of ``base ring'' along $p^*$; that is, the action is given by
\[bx = p^*(b)\wedge x\]
for all $b\in \opens(B)$ and $x\in \opens(X)$. This makes $X$ an \emph{$\opens(B)$-locale}, by which is meant that $\opens(X)$ is equipped with a structure of $\opens(B)$-module satisfying the condition $b1\land x=bx$ for all $b\in\opens(B)$ and $x\in\opens(X)$. We remark that the map $p$ can be recovered from the module structure by the condition $p^*(b)=b1$.

By a \emph{map of $\opens(B)$-locales} is meant a map of locales $f$ whose inverse image $f^*$ is a homomorphism of $\opens(B)$-modules. The resulting \emph{category of $\opens(B)$-locales} is denoted by $\opens(B)$-$\Loc$, and it is isomorphic to the slice category $\Loc/B$ \cite[Theorem 1]{RR}.

\paragraph{Open maps.}
If $p:X\to B$ is an open map the unit of the adjunction $p_!\dashv p^*$ gives us $p_!(x)x=x$ for all $x\in \opens(X)$. Conversely, if $X$ is a locale for which $\opens(X)$ is an $\opens(B)$-module equipped with a homomorphism $\spp:\opens(X)\to\opens(B)$ of $\opens(B)$-modules such that $\spp(x)x=x$ for all $x\in\opens(X)$ then $X$ is an $\opens(B)$-locale and the corresponding map of locales $p:X\to B$ is open with $p_!=\spp$ \cite[Theorem 3]{RR}.

Such an $\opens(B)$-locale is called \emph{open}. For each $x\in\opens(X)$ the element $\spp(x)$ is referred to as the \emph{support} of $x$, and $\spp$ itself is called the \emph{support} of $X$, in imitation of the terminology for supported quantales (\cf\ section \ref{sec:supp}).

\paragraph{Sheaves.}
Now let $p:X\to B$ be a local homeomorphism. The images of the local sections of $p$ can be identified \cite[\S 2.3]{RR} with the elements $s\in\opens(X)$ such that
\begin{eq}\label{eq:locsec1}
\forall_{x\in \opens(X)}\ \ x\le s\Rightarrow x=\spp(x) s\;.
\end{eq}%
Henceforth we shall refer to the elements that satisfy (\ref{eq:locsec1}) simply as \emph{local sections}, and we shall denote the set of all the local sections by $\sections_X$. Of course, we have $\V\sections_X=1$ (`the local sections cover $X$').

Any open $\opens(B)$-locale $X$ which is thus covered by the local sections is called an \emph{\'etale $\opens(B)$-locale} and the full subcategory of $\opens(B)$-$\Loc$ whose objects are the \'etale $\opens(B)$-locales is denoted by $\opens(B)$-$\Etale$. Of course, $\opens(B)$-$\Etale$ is equivalent to $\Etale/B$, the full subcategory of $\Loc/B$ whose objects are the local homeomorphisms into $B$, which in turn is equivalent to $\sh(B)$, the category of sheaves on $B$ and natural transformations between them. Hence, from here on we adopt the following shorter terminology:

\begin{definition}\label{sec:Asheaf}
Let $A$ be a frame. By an \emph{$A$-sheaf} is meant an \'etale $A$-locale.
\end{definition}

If $X$ and $Y$ are $\opens(B)$-sheaves, by a \emph{sheaf homomorphism} \[h:\opens(X)\to\opens(Y)\] is meant a homomorphism of $\opens(B)$-modules which preserves supports and local sections; that is,
\begin{eqarray}
\spp(h(x))&=&\spp(x)\textrm{ for all }x\in \opens(X)\\
h(\sections_X)&\subset&\sections_Y\;.
\end{eqarray}%
The sheaf homomorphisms are the direct images $f_!:\opens(X)\to\opens(Y)$ of the maps $f:X\to Y$ \cite[Theorem 5]{RR}. The category whose objects are the $\opens(B)$-sheaves and whose arrows are the sheaf homomorphisms between them is isomorphic to $\opens(B)$-$\Etale$ and it is denoted by $\opens(B)$-$\sh$.

There is an alternative way of describing the sheaves on $B$, in terms of Hilbert modules on $\opens(B)$ (\cf\ section \ref{sec:hilbertmodules}): the $\opens(B)$-sheaves are precisely the same as the Hilbert $\opens(B)$-modules which are equipped with Hilbert bases (\cf\ \ref{exm:BsheavesasHilbertmodules}). The Hilbert module inner product of an $\opens(B)$-sheaf $X$ is given by
\begin{eq}\label{firstip}
\inner x y = \spp(x\land y)\;,
\end{eq}%
and the adjoint $\varphi^\dagger$ of a sheaf homomorphism $\varphi=f_!:\opens(X)\to\opens(Y)$, which is defined by the condition $\inner{\varphi(x)}y=\inner x{\varphi^\dagger(y)}$, coincides with the inverse image homomorphism $f^*$ \cite[Theorem 11]{RR}.

We conclude this brief exposition on locale sheaves with a useful fact not mentioned in \cite{RR}:

\begin{lemma}\label{lem:firsthilbertsections}
Let $B$ be a locale, let $X$ be an $\opens(B)$-sheaf, and let $s\in\opens(X)$. Then $s$ is a local section if and only if
\begin{eq}\label{eq:locsec2}
\forall_{x\in \opens(X)}\ \ \inner x s s\le x\;.
\end{eq}%
\end{lemma}

\begin{proof}
The equivalence is easily proved:
\begin{itemize}
\item If $s$ satisfies (\ref{eq:locsec1}) and $x\in \opens(X)$ then $x\land s\le s$ and, hence, we have
$\inner x s s=\spp(x\land s)s=x\land s\le x$.
\item Conversely, if $s$ satisfies (\ref{eq:locsec2}) and $x\le s$ then
$x=\spp(x) x=\spp( x\land x) x\le\spp(x\land s)s=\inner x s s\le x$. \qed
\end{itemize}
\end{proof}

\section{Groupoid actions as quantale modules}\label{sec:gaqm}

In this section we show that the assignment from open groupoids to quantales has a one-sided generalization whereby actions of open groupoids define quantale modules. We shall begin by addressing the more general situation, for open groupoids, after which \'etale groupoids will be considered along with actions on open maps and local homeomorphisms. A module-theoretic formulation of the actions of \'etale groupoids will be obtained, and a first description of groupoid sheaves in terms of quantale modules will be achieved.

\subsection{Actions of open groupoids}

\paragraph{Preliminaries on groupoid actions.}

Let $G$ be a localic groupoid. By a \emph{locale over $G_0$}, or simply a \emph{$G_0$-locale}, will be meant a locale $X$ together with a map $p:X\to G_0$ called the \emph{projection} into $G_0$. The \emph{category of $G_0$-locales} is the slice category $\Loc/G_0$. A \emph{(left) action} of $G$ on the $G_0$-locale $(X,p)$ is a map of locales
$\mathfrak a:G_1\pb X\to X$
such that the following diagrams commute, where $G_1\pb X$, $G_2\pb X$ and $G_1\pb(G_1\pb X)$ are pullbacks in $\Loc$ respectively of $r$ and $p$, $r\circ\pi_2$ and $p$, and $r$ and $d\circ\pi_1$:
\begin{eqarray}
&\vcenter{\xymatrix{G_1\pb X\ar[rr]^-{\pi_1}\ar[d]_{\mathfrak a}&&G_1\ar[d]^d\\
X\ar[rr]_{p}&&G_0}} &  \label{crucialpb}
\end{eqarray}%
\begin{eqarray}
&\vcenter{\xymatrix{
G_1\pb(G_1\pb X)\ar[d]_{\cong}\ar[rrr]^-{1\times\mathfrak a}&&&G_1\pb X\ar[dd]^{\mathfrak a}\\
G_2\pb X\ar[d]_{m\times 1}\\
G_1\pb X\ar[rrr]_-{\mathfrak a}&&&X
}} & \textrm{(Associativity)}\label{associativity}
\end{eqarray}%
\begin{eqarray}
&\vcenter{\xymatrix{&G_1\pb X\ar[dr]^{\mathfrak a}\ar@{<-}[dl]_{\langle u\circ p,1\rangle}\\
X\ar@{=}[rr]&&X
}} & \textrm{(Unitarity)}\label{unitality}
\end{eqarray}%
The $G_0$-locale $(X,p)$ together with the action $\mathfrak a$ will be referred to as a \emph{(left) $G$-locale} and we shall denote it by $(X,p,\mathfrak a)$, or simply by $X$ when no confusion will arise.

The following simple fact will be useful a few times later on:

\begin{lemma}\label{crucialemma}
Let $p:X\to G_0$ be a map of locales and let $G_1\pb X$ be the pullback of $r$ and $p$. Then the projection
$\pi_1:G_1\pb X\to G_1$ coincides with the map
$m\circ (1\times(u\circ p))$. In particular, (\ref{crucialpb}) is equivalent to the equation
$p\circ\mathfrak a = d\circ m\circ (1\times(u\circ p))$.
\end{lemma}

\begin{proof}
This follows from the commutativity of the following diagram, whose left triangle is obviously commutative and whose right triangle is commutative due to one of the unit laws of $G$:
\[
\vcenter{\xymatrix{
G_1\pb X\ar[rr]^{1\times p}\ar[rrd]_{\pi_1}
&&G_1\pb G_0\ar[rr]^{1\times u}\ar[d]^{\pi_1}_{\cong}
&&G_1\pb G_1\ar[lld]^m\\
&&G_1
}}
\qed\]
\end{proof}

\paragraph{From actions to modules.} It is easy to show that the diagram (\ref{crucialpb}) is a pullback (briefly, because the action can be reversed due to the inversion operation $i$ of the groupoid), and thus if $G$ is an open groupoid the action map $\mathfrak a$ is necessarily open. Hence, in this case, taking into account that $G_1\pb X$ is, in $\Frm$, a quotient $G_1\tensover X$ of the tensor product $G_1\otimes X$, we obtain a sup-lattice homomorphism by composing with the direct image of the action:
\[\xymatrix{G_1\otimes X\ar@{->>}[r]&G_1\tensover X\ar[r]^-{\mathfrak a_!}&X}\]
Showing that this defines an action of $\opens(G)$ on $X$ (a left quantale module) is straightforward and essentially the same as the proofs of associativity and unit laws for the quantale $\opens(G)$ (\cf\ \cite[Theorem 5.2 and Lemma 5.8]{Re07}).

\begin{definition}
Let $G$ be an open groupoid. We shall denote by $\opens(X)$ the left $\opens(G)$-module which is obtained from a $G$-locale $X$.
\end{definition}

\paragraph{Equivariant maps.} Let $X$ and $Y$ be $G$-locales with actions $\mathfrak a$ and $\mathfrak b$, respectively. An \emph{equivariant map} from $X$ to $Y$ is a map $f:X\to Y$ in $\Loc/G_0$ that commutes with the actions; that is, such that the following diagram commutes:
\[\xymatrix{G_1\pb X\ar[rrr]^-{1\times f}\ar[d]_{\mathfrak a}&&&G_1\pb Y\ar[d]^{\mathfrak b}\\
X\ar[rrr]_f&&&Y}
\]
We shall refer to the category of $G$-locales and equivariant maps between them as $G$-$\Loc$.
It is simple to see that, since $G$ is a groupoid rather than just a category, the above diagram is actually a pullback. Hence, if $G$ is an open groupoid, in which case as we have seen the actions are open maps, the following diagram in $\SL$ also commutes \cite[Proposition V.4.1]{JT}:
\[\xymatrix{G_1\tensover X\ar@{<-}[rrr]^-{1\otimes f^*}\ar[d]_{\mathfrak a_!}&&&G_1 \tensover Y\ar[d]^{\mathfrak b_!}\\
X\ar@{<-}[rrr]_{f^*}&&&Y}
\]
This implies that the locale homomorphism $f^*$ commutes with the actions of $\opens(G)$ on $\opens(X)$ and $\opens(Y)$, and thus it is a homomorphism of $\opens(G)$-modules. Hence,
we obtain:

\begin{lemma}\label{thm:firstfunctor}
The assignments $X\mapsto\opens(X)$ and $f\mapsto f^*$ define a faithful functor $\opens:G\textrm{-}\Loc\to\opp{\opens(G)\textrm{-}\Mod}$.
\end{lemma}
Comparing this with \cite[Theorem 5.14 and Example 5.15]{Re07} we see that the assignment from groupoid actions to modules has better functorial properties than the assignment from groupoids to quantales.

This functor is not full, but we make the following observation:

\begin{lemma}\label{thm:lax}
Let $G$ be an open groupoid and let $f:X\to Y$ be a map of locales such that $f^*$ is a homomorphism of $\opens(G)$-modules. Denoting the actions of $X$ and $Y$ by $\mathfrak a$ and $\mathfrak b$, respectively, we have $f\circ\mathfrak a\ge\mathfrak b\circ(1\times f)$.
\end{lemma}

\begin{proof}
Let us prove the inverse image version of the inequality, that is
\[\mathfrak a^*\circ f^*\ge \mathfrak (1\otimes f^*)\circ b^*\;,\] using the equality
$f^*\circ\mathfrak b_!=\mathfrak a_!\circ(1\otimes f^*)$ that corresponds to the $Q$-equivariance of $f^*$:
\[\mathfrak a^*\circ f^*\ge \mathfrak a^*\circ f^*\circ \mathfrak b_!\circ\mathfrak b^*=\mathfrak a^*\circ\mathfrak a_!\circ (1\otimes f^*)\circ\mathfrak b^*\ge (1\otimes f^*)\circ\mathfrak b^*=(1\times f)^*\circ\mathfrak b^*\;. \qed\]
\end{proof}

\subsection{Actions of \'etale groupoids}\label{sec:actions}

Now we study actions of localic \'etale groupoids. As we shall see, the existence of a base locale (\cf\ \ref{def:baselocale}) for the quantale of such a groupoid enables us to extend to groupoid actions the module language of locale sheaves (\cf\ \ref{sec:localesheavesasmodules}). We remark that more could have been said along these lines for open groupoids, too, since there is a (more general) notion of base locale for the quantales of these \cite{ProtinPhD,protinresende}, but for the purposes of this paper that is not needed.

\paragraph{$Q$-locales.}

For any localic \'etale groupoid $G$, if $X$ is a $G$-locale with projection $p:X\to G_0$ then $\opens(X)$ is an $\opens(G_0)$-module by change of ``ring'' along the inverse image homomorphism $p^*:\opens(G_0)\to \opens(X)$. Letting $B$ denote the base locale of $\opens(G)$, the same action of $\opens(G_0)$ on $\opens(X)$ can be obtained through the isomorphism $\opens(G_0)\cong B$ by restricting the action of $Q$:

\begin{lemma}\label{lem:actionscoincide}
Let $G$ be an \'etale groupoid and let $X$ be a $G$-locale with projection $p:X\to G_0$. For all $b\in \opens(G_0)$ and $x\in \opens(X)$ we have $u_!(b)x=p^*(b)\land x$. In particular, $\opens(X)$ is a unital $Q$-module and the action uniquely defines $p$ by the equation $p^*(b)=u_!(b)1$.
\end{lemma}

\begin{proof}
Axiom (\ref{unitality}) of $G$-locales is $\mathfrak a\circ \langle u\circ p,1\rangle=1$,
which we can rewrite as $\mathfrak a\circ(u\times 1)\circ\langle p,1\rangle=1$, where  the pairing $\langle p,1\rangle:X\to G_0\pb X$ is an isomorphism and thus
$\mathfrak a\circ(u\times 1) = \langle p,1\rangle^{-1}$.
Hence, we have \[\mathfrak a_!\circ(u_!\otimes 1)=[p^*,1]\]
and the required equation follows:
\begin{eqnarray*}
u_!(b)x &=& \mathfrak a_!(u_!(b)\otimes x)=(\mathfrak a_!\circ(u_!\otimes 1))(b\otimes x)\\
&=&[p^*,1](b\otimes x) = p^*(b)\land x\;. \qed
\end{eqnarray*}
\end{proof}

Hence, the faithful functor $\opens:G$-$\Loc\to \opp{Q\textrm{-}\Mod}$ of \ref{thm:firstfunctor} restricts to a functor to the following category $Q$-$\Loc$:

\begin{definition}
Let $Q$ be an inverse quantal frame with base locale $B$. By a \emph{$Q$-locale} will be meant a locale $X$ such that $\opens(X)$ is a (unital) left $Q$-module whose action satisfies the condition $bx=b1\land x$ for all $b\in B$ and $x\in \opens(X)$. The \emph{category of $Q$-locales}, $Q$-$\Loc$, is that whose objects are the $Q$-locales and whose morphisms $f:X\to Y$ are the maps of locales such that $f^*$ is a homomorphism of $Q$-modules.
\end{definition}

\begin{example}\label{QisQloc}
Any inverse quantal frame $Q$ itself defines a $Q$-locale, since $(G,d,m)$ is a $G$-locale: the equality $ba=b1\land a$ holds for all $b\in B$ and $a\in Q$, and, due to the involution, $ab=1b\land a$ also holds (corresponding to the right $G$-locale structure of $G$ with projection $r$).
\end{example}

\begin{example}\label{QtensorX}
Let $Q=\opens(G)$ be an inverse quantal frame with base locale $B$. If $X$ is a $B$-locale then $Q\otimes_B \opens(X)$ is a frame whose natural left $Q$-action defines a $Q$-locale:
\[b(a\otimes x)=ba\otimes x=(b1\land a)\otimes x=b(1\otimes 1)\land (a\otimes x)\;.\]
If $X$ corresponds to a $G_0$-locale $p:X\to G_0$ then the $Q$-locale $Q\otimes_B \opens(X)$ corresponds to a $G$-locale $G_1\pb X$ whose projection $d\circ\pi_1$ (where $\pi_1$ is the pullback of $p$ along $r$) is an open map (resp.\ a local homeomorphism) if $p$ is.
\end{example}

\begin{example}\label{Blocales}
If the inverse quantal frame $Q$ coincides with its base locale $B$ (\ie, the corresponding groupoid $G$ is just the locale $G_1=G_0$ with identity structure maps) the category $B$-$\Loc$ is that of section \ref{sec:localesheavesasmodules}.
\end{example}

\paragraph{Multiplicativity.}

Let $G$ be an \'etale groupoid. Any left $\opens(G)$-module $M$ (not necessarily an $\opens(G)$-locale, or even a locale) is also a left $B$-module due to the inclusion of its base locale $B\subset \opens(G)$. Hence, we can form the tensor product $\opens(G)\otimes_B M$. The associativity of the action $\opens(G)\otimes M\to M$ implies that it factors through the quotient $\opens(G)\otimes M\to \opens(G)\otimes_B M$ and a sup-lattice homomorphism $\alpha:\opens(G)\otimes_B M\to M$, whose right adjoint $\alpha_*$ is given by
\begin{eqarray}
\alpha_*(x) &=& \V\{a\otimes y\in \opens(G)\otimes_B M\st \alpha(a\otimes y)\le x\}\\
 &=& \V\{a\otimes y\in \opens(G)\otimes_B M\st ay\le x\}\label{rightadjoint}\;.
\end{eqarray}%
But the fact that $\opens(G)$ is an inverse quantal frame provides us with a more useful formula for $\alpha_*$:

\begin{lemma}\label{alphastar}
Let $Q$ be an inverse quantal frame with base locale $B$ and let $M$ be a left $Q$-module with action $\alpha:Q\otimes_B M\to M$. The right adjoint $\alpha_*$ is given by, for all $x\in M$,
\begin{eq}\alpha_*(x)=\V_{s\in\ipi(Q)} s\otimes s^*x\;.\label{alphastareq}
\end{eq}%
It follows that $\alpha_*$ preserves arbitrary joins (besides arbitrary meets).\end{lemma}

\begin{proof}
Since $\ipi(Q)$ is join-dense in $Q$ and joins distribute over tensors we can equivalently replace $a$ in (\ref{rightadjoint}) by $s\in\ipi(Q)$ and thus obtain
\begin{eqnarray*}
\alpha_*(x) &=& \V_{sy\le x} s\otimes y\ \le
\V_{s^*sy\le s^*x} s\otimes y\ =\V_{s^*sy\le s^*x} ss^*s\otimes y\\
&=&
\V_{s^*sy\le s^*x} s\otimes s^*sy\ \ [\textrm{because }s^*s\in B \textrm{ --- } \cf\ (\ref{sppIQ})]\\
&\le&\V_{s\in\ipi(Q)} s\otimes s^*x\ \le\ \alpha_*(x)\;,
\end{eqnarray*}
where the last inequality is a consequence of the fact that for each $s\in\ipi(Q)$ we have $ss^*x\le x$ and thus $s\otimes s^* x\le\alpha_*(x)$. Hence, all the above inequalities are in fact equalities. The fact that $\alpha_*$ preserves joins is an immediate consequence, for if $Y\subset M$ then
\[\alpha_*\left(\V Y\right) = \V_{s\in\ipi(Q)} s\otimes s^*\V Y = \V_{x\in Y}\V_{s\in\ipi(Q)} s\otimes s^*x=\V\alpha_*(Y)\;. \qed\]
\end{proof}

\begin{remark}
This result holds under more general assumptions, namely it suffices that $Q$ be a unital involutive quantale containing a join-dense sub-involutive-semigroup $S\subset Q$ such that $ss^*\le e$ and $s\le ss^*s$ (hence, $s=ss^*s$) for all $s\in S$ (notice that $B=\downsegment(e)$ is always a unital involutive subquantale of $Q$ and the same remarks about the tensor product $Q\otimes_B M$ apply). In this more general situation we obtain
\[\alpha_*(x)=\V_{s\in S} s\otimes s^*x\;.\]
Examples of such quantales are the inverse quantales of \cite{Re07} --- the set $\ipi(Q)$ of partial units of an inverse quantale $Q$ is a join-dense complete inverse monoid whose locale of idempotents coincides with $B$. Such a quantale is of the form $\opens(G)$ for an \'etale groupoid $G$ if and only if it is also a frame \cite{Re07}. As a corollary of this we conclude that the multiplication $\mu:Q\otimes_B Q\to Q$ of an inverse quantale $Q$ necessarily has a join preserving right adjoint given by
\begin{eq}\label{mustar}
\mu_*(a)=\V_{s\in \ipi(Q)} s\otimes s^*a
\end{eq}%
In particular, we obtain in this way a new and simpler proof of the fact that every inverse quantal frame is multiplicative.
\end{remark}

\paragraph{Equivalence between $G$-locales and $Q$-locales.}
Now we shall see that the categories of $G$-locales and of $\opens(G)$-locales, for any \'etale groupoid $G$, amount to the same thing.

\begin{lemma}\label{strictbijection}
Let $G$ be an \'etale groupoid. The assignment $X\mapsto\opens(X)$ from $G$-locales to $\opens(G)$-locales is a (strict) bijection.
\end{lemma}

\begin{proof}
Let $Q=\opens(G)$ and let $X$ be a $Q$-locale. The inclusion of the base locale $B\subset Q$ makes $\opens(X)$ a $B$-locale and thus we have a map $p:X\to G_0$ defined by $p^*(b)=u_!(b)1$ (\cf\ \ref{Blocales}). Since the pullback $G_1\pb X$ of $r$ and $p$ is, in the category of frames, the quotient of the frame coproduct $\opens(G_1)\otimes \opens(X)$ generated by the equalities
\begin{eq}\label{pushout}
\pi_1^*(r^*(b)) = \pi_2^*(p^*(b))\;,
\end{eq}%
the $Q$-locale conditions $p^*(b)\land x=u_!(b)x$ and $a\land r^*(b)=a u_!(b)$ (\cf\ \ref{QisQloc}) show, if we stabilize (\ref{pushout}) under finite meets, that $G_1\pb X$ coincides with the sup-lattice quotient generated by the equalities
$a u_!(b)\otimes x =  a\otimes u_!(b)x$, in other words it is the tensor product of $B$-modules $Q\otimes_B \opens(X)$.
Since the right adjoint $\alpha_*$ of the module action
\[\alpha:Q\otimes_B \opens(X)\to \opens(X)\]
preserves joins (see \ref{alphastar}), we define a groupoid action $\mathfrak a:G_1\pb X\to X$ by $\mathfrak a^*=\alpha_*$ and in order to see that we have obtained a $G$-locale all we need is to verify that the three axioms (\ref{crucialpb})--(\ref{unitality}) are satisfied. Of course, once this is done our proof will be finished because it is clear that the construction of the $G$-locale structure from the $Q$-locale thus obtained is the inverse of the assignment $Y\mapsto\opens(Y)$.

Axiom (\ref{associativity}) (the associativity of $\mathfrak a$) follows in a straightforward manner from the associativity of $\alpha$ because $\alpha=\mathfrak a_!$. (This is completely analogous to the way in which the associativity of the multiplication of an open groupoid follows from the associativity of the multiplication of its quantale, \cf\ \cite[Theorem 4.8]{Re07}.)

Proving the two other axioms is less easy because $p$ is not necessarily an open map and thus we do not have straightforward direct image versions of the axioms we want to prove. Let us start with axiom (\ref{crucialpb}). By \ref{crucialemma}, this is equivalent to the equation $p\circ \mathfrak a=d\circ m\circ (1\times(u\circ p))$, which we can verify directly in terms of inverse images using the formulas (\ref{alphastareq}) and (\ref{mustar}) for $\mathfrak a^*$ and $m^*$: on one hand we have
\[\mathfrak a^*(p^*(b)) = \V_{s\in\ipi(Q)} s\otimes s^*u_!(b)1_X\]
and, on the other,
\[m^*(d^*(b)) = \V_{s\in\ipi(Q)} s\otimes s^*u_!(b)1_Q\;.\]
The inverse image of $1\times(u\circ p)$ is given by
\[(1\otimes(p^*\circ u^*))(a\otimes c)=a\otimes((c\land e)1_X)\]
and, combining these formulas, we obtain
\[1\otimes(p^*\circ u^*)(m^*(d^*(b))) =
 \V_{s\in\ipi(Q)} s\otimes (s^*u_!(b)1_Q\land e)1_X=\mathfrak a^*(p^*(b))\;,\]
where the last step follows from the following three facts: (i) $s^* u_!(b)$ belongs to $\ipi(Q)$; (ii) for all $t\in\ipi(Q)$ we have $t1_Q\land e=\spp(t)=tt^*$; (iii) for all $t\in\ipi(Q)$ we have $tt^* 1_X\le t 1_X = tt^* t 1_X\le tt^* 1_X$, and thus
$(s^* u_!(b)1_Q\land e)1_X=s^* u_!(b) 1_X$.

Now let us verify axiom (\ref{unitality}). The inverse image of $\mathfrak a\circ\langle u\circ p,1\rangle$ is given by
\[[p^*\circ u^*,1](\mathfrak a^*(x)) = \V_{s\in\ipi(Q)} p^*(u^*(s))\land s^*x
= \V_{s\in\ipi(Q)} (s\land e)1_X\land s^* x\;.\]
Since $X$ is a $Q$-locale we have
$(s\land e)1_X\land s^* x=(s\land e)s^* x$ and, since $s$ is in the inverse monoid $\ipi(Q)$, we also have $(s\land e)s^*=s\land e$. Hence,
\[ \V_{s\in\ipi(Q)} (s\land e)1_X\land s^* x = \V_{s\in\ipi(Q)}(s\land e)x=
\left(\V\ipi(Q)\land e\right)x=ex=x\]
and we conclude that $\mathfrak a\circ\langle u\circ p,1\rangle=1$ as required. \qed
\end{proof}

\begin{theorem}\label{lem:fullfaithfullh}
Let $G$ be an \'etale groupoid. The categories $G$-$\Loc$ and $\opens(G)$-$\Loc$ are isomorphic.
\end{theorem}

\begin{proof}
Let $Q=\opens(G)$. All we need to do is show that the functor $\opens:G$-$\Loc\to Q$-$\Loc$ is full.
Let $X$ and $Y$ be $G$-locales, let $f:X\to Y$ be a map of locales such that $f^*$ is a homomorphism of $Q$-modules, and let the actions of $G$ on $X$ and $Y$ be $\mathfrak a$ and $\mathfrak b$, respectively. By \ref{thm:lax}, in order to prove that the functor is full we only have to prove, for all $y\in \opens(Y)$, the inequality
\begin{eq}\label{eq:inequality}
\mathfrak a^*(f^*(y))\le (1\otimes f^*)(\mathfrak b^*(y))\;.
\end{eq}%
From \ref{alphastar} and the fact that $f^*$ is $Q$-equivariant we have
\[\mathfrak a^*(f^*(y))=\V_{s\in\ipi(Q)} s\otimes s^*(f^*(y))=\V_{s\in\ipi(Q)} s\otimes f^*(s^*y)\;.\]
The expression $s\otimes f^*(s^*y)$ on the right equals $(1\otimes f^*)(s\otimes(s^*y))$, and we have $s\otimes(s^*y)\le\mathfrak b^*(y)$ because $\mathfrak b_!(s\otimes(s^*y))=ss^*y\le y$. This proves the inequality (\ref{eq:inequality}).
\qed
\end{proof}

\subsection{Actions on sheaves}\label{sec:actionsonsheaves}

\paragraph{Actions on open maps.}

For any groupoid $G$, by an \emph{open} $G$-locale will be meant a $G$-locale whose projection is an open map. Similarly, for an \'etale groupoid the corresponding $\opens(G)$-locales will be called \emph{open}. Their description is very simple and does not even require the $\opens(G)$-locale condition:

\begin{lemma}
Let $Q$ be an inverse quantal frame with base locale $B$, and let $X$ be a locale such that $\opens(X)$ is a $Q$-module. Then $X$ is an open $Q$-locale if and only if there exists a (necessarily unique) homomorphism of $B$-modules
\[\spp : \opens(X)\to B\]
such that $\spp(x)x=x$ for all $x\in \opens(X)$.
\end{lemma}

\begin{proof}
This follows immediately from the description of open maps of locales $p:X\to B$ in terms of $\opens(B)$-modules (\cf\ section \ref{sec:localesheavesasmodules}): if $p$ is open, the homomorphism $\spp$ equals $u_!\circ p_!$. \qed
\end{proof}

If $Q$ is an inverse quantal frame and $X$ is an open $Q$-locale with $x\in \opens(X)$, we shall refer to $\spp(x)$ as the \emph{support} of $x$, and $\spp$ itself will be said to be the \emph{support} of $\opens(X)$, thus extending the terminology of section \ref{sec:localesheavesasmodules}. 

\begin{example}\label{openQloc}
Let $Q$ be an inverse quantal frame with base locale $B$. If $X$ is an open $B$-locale then $Q\otimes_B \opens(X)$ defines an open $Q$-locale (\cf\ \ref{QtensorX}). Its support is defined by $\spp(a\otimes x)=\spp(a\spp(x))$.
\end{example}

The following are useful properties of open $Q$-locales:

\begin{theorem}\label{lem:openGbundle}\label{lem:QLoctoBLoc}
Let $Q$ be an inverse quantal frame and let $X$ be an open $Q$-locale.
\begin{enumerate}
\item\label{bspp2} $\spp(ax) = \spp(a\spp(x))$ for all $a\in Q$ and $x\in \opens(X)$.
\item\label{bspp3} $\spp(ax) \le \spp(a)$ for all $a\in Q$ and $x\in \opens(X)$.
\item\label{conjugation} $\spp(sx)=s\spp(x)s^*$ for all $s\in \ipi(Q)$ and $x\in \opens(X)$.
\end{enumerate}
\end{theorem}

\begin{proof}
Denoting by $p$ and $\mathfrak a$ the projection and the action of the corresponding $G$-locale and using the equality $p\circ\mathfrak a = d\circ m\circ (1\times(u\circ p))$ of \ref{crucialemma} we prove \ref{bspp2}:
\[\spp(ax) = (u\circ p\circ\mathfrak a)_!(a\otimes x)
=(u\circ d\circ m\circ (1\times(u\circ p)))_!(a\otimes x) = \spp(a\spp(x))\;.\]
Then \ref{bspp3} follows immediately: $\spp(ax)=\spp(a\spp(x))\le\spp(ae)=\spp(a)$;
and \ref{conjugation} is a consequence of the inequalities $s\spp(x)s^*\le ss^*\le e$ and
\begin{eqnarray*}
\spp(sx)&=&\spp(s\spp(x))\le (s\spp(x))(s\spp(x))^*=s\spp(x)s^*\\
&=&\spp(s\spp(x)s^*)\le\spp(s\spp(x))\\
&=&\spp(sx)\;. \qed
\end{eqnarray*}

\end{proof}

\paragraph{Actions on local homeomorphisms.}

Let $G$ be an \'etale groupoid. A \emph{$G$-sheaf} is a $G$-locale whose projection is a local homeomorphism. The full subcategory of $G$-$\Loc$ whose objects are the $G$-sheaves (the classifying topos of $G$) is usually denoted by $BG$ and the isomorphism $G$-$\Loc\cong \opens(G)$-$\Loc$ yields, by restriction, a corresponding full subcategory:

\begin{definition}\label{def:Qsheaf}
Let $Q$ be an inverse quantal frame with base locale $B$. By an \emph{\'etale $Q$-locale}, or simply \emph{$Q$-sheaf}, is meant a (necessarily open) $Q$-locale $X$ such that the induced action of $B$ on $\opens(X)$ defines a $B$-sheaf. The \emph{category of \'etale $Q$-locales}, denoted by $Q$-$\Etale$, is the full subcategory of $Q$-$\Loc$ whose objects are the \'etale $Q$-locales.
\end{definition}

This is the natural definition, for an inverse quantal frame $Q$, of a ``$Q$-equivariant'' sheaf on $B$. We borrow the following terminology from the $B$-sheaves of \cite{RR}:

\begin{definition}
Let $Q$ be an inverse quantal frame with base locale $B$, and let $X$ be a $Q$-sheaf. The \emph{local sections} of $X$ are the local sections of $X$ regarded as a $B$-sheaf; that is, a local section is an element $s\in\opens(X)$ satisfying the equivalent conditions (\ref{eq:locsec1}) and (\ref{eq:locsec2}). The set of local sections of $X$ is denoted by $\sections_X$.
\end{definition}

\begin{example}
Any inverse quantal frame $Q$ itself defines a $Q$-sheaf and we have $\ipi(Q)\subset\sections_Q$.
\end{example}

\begin{example}
If $Q$ is an inverse quantal frame and $X$ is a $Q$-sheaf then $Q\otimes_B \opens(X)$ defines a $Q$-sheaf (\cf\ \ref{openQloc}).
\end{example}

An alternative notion of morphism of $Q$-sheaves, which maps local sections to local sections in the same way that a natural transformation between sheaves does, is the following (\cf\ the sheaf homomorphisms of section \ref{sec:localesheavesasmodules}):

\begin{definition}\label{def:shhom}
Let $Q$ be an inverse quantal frame and let $X$ and $Y$ be $Q$-sheaves. A \emph{sheaf homomorphism} $h:\opens(X)\to \opens(Y)$ is a homomorphism of left $Q$-modules that preserves supports and local sections; that is,
\begin{eqarray}
\spp(h(x))&=&\spp(x)\textrm{ for all }x\in \opens(X)\\
h(\sections_X)&\subset&\sections_Y\;.
\end{eqarray}%
The category of $Q$-sheaves and sheaf homomorphisms between them is denoted by $Q$-$\sh$.
\end{definition}

\begin{theorem}\label{lem:fullfaithfulsh}
Let $G$ be a localic \'etale groupoid and let $Q=\opens(G)$. The categories $BG$, $Q$-$\Etale$ and $Q$-$\sh$ are isomorphic.
\end{theorem}

\begin{proof}
$BG$ and $Q$-$\Etale$ are isomorphic by definition, so let us see that $Q$-$\Etale$ and $Q$-$\sh$ are isomorphic.
Let $X$ and $Y$ be $G$-sheaves with actions $\mathfrak a$ and $\mathfrak b$, respectively. If $f:X\to Y$ is a map of $G$-sheaves then $f$ is a local homeomorphism and $f_!$ is necessarily a sheaf homomorphism of $B$-sheaves (\cf\ \ref{sec:localesheavesasmodules}). From the equivariance condition
\begin{eq}\label{eq:Gequiv}
f\circ\mathfrak a=\mathfrak b\circ(1\times f)
\end{eq}%
we obtain, passing to direct images, the condition
\begin{eq}\label{eq:Qequiv}
f_!\circ\mathfrak a_!=\mathfrak 
b_!\circ (1\otimes f_!)
\end{eq}%
and thus $f_!$ is also a homomorphism of $Q$-modules. 
Therefore the assignment $f\mapsto f_!$ defines a faithful functor $F:Q$-$\Etale\to Q$-$\sh$ which is the identity on objects.

Now let
$h:\opens(X)\to \opens(Y)$ be an arbitrary sheaf homomorphism of $Q$-sheaves. This is also a sheaf homomorphism of $B$-sheaves and thus it is the direct image $f_!$ of a locale map $f:X\to Y$. The $Q$-equivariance of $h$ is therefore the condition (\ref{eq:Qequiv}). We obtain the inverse image homomorphism version of (\ref{eq:Gequiv}) by taking right adjoints, and thus we conclude that $F$ is full.
\qed
\end{proof}

\section{Groupoid sheaves as Hilbert modules}\label{sec:qsetshilbmod}\label{sec:gshm}

In this section we begin by studying the notion of complete Hilbert module, by which is meant a Hilbert quantale module equipped with a ``basis'', following which we establish an equivalence of quantaloids, for a given involutive quantale $Q$, between the quantaloid of $Q$-valued sets and the quantaloid of complete Hilbert $Q$-modules (\cf\ \ref{equivrelQQhmb}). Then we specialize the theory of complete Hilbert modules to supported quantales and finally we prove, for an \'etale groupoid $G$, that the $\opens(G)$-sheaves can be identified with the complete Hilbert $\opens(G)$-modules, which leads to the envisaged equivalence between the classifying topos $BG$ and the category of $\opens(G)$-valued sets.

\subsection{Hilbert bases}\label{sec:hilbertmodules}

The terminology ``Hilbert module'' was introduced by Paseka \cite{Paseka} as an adaptation to the context of involutive quantales of the notion of Hilbert C*-module (see \cite{Lance}), partly with the goal of relating aspects of the theory of operator algebras to quantales (see, \eg, \cite{Paseka2}). The notion of Hilbert basis appeared subsequently as a means of describing sheaves on involutive quantales and locales \cite{CT07,RR}, moreover in a way that meanwhile \cite{HS2} has been related in a precise way to (ordered) sheaves on non-involutive quantales via the notion of principally generated module of \cite{HS1}.

\paragraph{Basic definitions and examples.}

Let us begin by recalling the notion of Hilbert module:

\begin{definition}(\cite{Paseka})
Let $Q$ be an involutive quantale. By a \emph{pre-Hilbert $Q$-module} will be meant a left $Q$-module $X$ equipped with a binary operation
\[\inner --:X\times X\to Q\;,\]
called the \emph{inner product}, which for all $x, x_i,y\in X$ and $a\in Q$ satisfies the following axioms:
\begin{eqarray}
\inner{ax}y&=&a\inner x y\\
\left\langle\V_i x_i,y\right\rangle &=&\V_i\inner{x_i}y\\
\inner x y&=&\inner y x^*\;.
\end{eqarray}%
By a \emph{Hilbert $Q$-module} will be meant a pre-Hilbert $Q$-module whose inner product is ``non-degenerate'':
\begin{eq}
\inner x -=\inner y -\Rightarrow x=y\;.
\end{eq}%
\end{definition}

We remark that, in particular, inner products are ``sesquilinear forms'':
\begin{eq}
\left\langle x,\V a_i y_i\right\rangle=\V\inner x{y_i}a_i^*\;.
\end{eq}%

\begin{example}\label{exm:functionmodule}
$Q$ itself is a pre-Hilbert $Q$-module \cite{Paseka} with the inner product defined by
\[\inner a b=ab^*\;.\]
The inner product is non-degenerate if $Q$ is unital.
More generally, if $I$ is a set then the set $Q^I$ of maps $\vect v:I\to Q$ is a left $Q$-module with the usual function module structure given by pointwise joins and multiplication on the left, and it is a pre-Hilbert module with the inner product $\inner {\vect v} {\vect w}=\vect v\cdot \vect w$ given by the standard dot product formula
\[\vect v\cdot \vect w=\V_{\alpha\in I} v_{\alpha} w_{\alpha }^*\;.\]
(We adopt, for functions in $Q^I$ and their values, the same notation as for vectors and their components in linear algebra --- \cf\ section \ref{sec:mvhb}.)
\end{example}

\begin{example}\label{exm:hilbmodlocsecs}
If $p:X\to B$ is a local homeomorphism of locales then $\opens(X)$ is a Hilbert $\opens(B)$-module whose inner product is defined by $\inner x y=p_!(x\land y)$  --- \cf\ (\ref{firstip}).
\end{example}

\paragraph{Hilbert sections.}

Let us pursue the analogy between Hilbert modules and sheaves suggested by \ref{exm:hilbmodlocsecs} and define what should be meant in general by a ``section'' of a Hilbert module, using \ref{lem:firsthilbertsections} as motivation:

\begin{definition}
Let $Q$ be an involutive quantale and let $X$ be a pre-Hilbert $Q$-module. By a \emph{Hilbert section} of $X$ is meant an element $s\in X$ such that $\inner x s s\le x$ for all $x\in X$. The set of all the Hilbert sections of $X$ is denoted by $\sections_X$. We say that the Hilbert module $X$ is \emph{complete}, or that it \emph{has enough sections}, if for all $x\in X$ we have the equality
\[x=\V_{s\in\sections_X} \inner x s s\;.\]
Any set $\sections\subset X$ such that
$x=\V_{s\in\sections}\inner x s s$ for all $x\in X$
is called a \emph{Hilbert basis} (in particular, we have $\sections\subset\sections_X$ and $\sections$ is a set of $Q$-module generators for $X$).
\end{definition}

The name ``Hilbert basis'' is suggested by the obvious formal resemblance with the properties of a Hilbert basis of a Hilbert space. Of course, a Hilbert basis in our sense is not an actual basis as in linear algebra because there is no freeness, but for the sake of simplicity and following \cite{RR} we retain this terminology.

\begin{example}\label{exm:hilbsec1}
The Hilbert $\opens(B)$-module determined by a local homeomorphism of locales $p:X\to B$ (\cf\ \ref{exm:hilbmodlocsecs}) is complete with $\sections_X$ as a Hilbert basis \cite{RR}. Furthermore, $\sections_X$ is an actual basis of $X$ in the sense of locale theory (the analogue for locales of a basis of a topological space).
\end{example}

\begin{example}\label{exm:hilbsec2}
If $Q$ is a unital involutive quantale, then $Q$ itself, regarded as a Hilbert $Q$-module with $\inner a b=ab^*$ as in \ref{exm:functionmodule},
has a set of Hilbert sections \[\sections_Q=\{s\in Q\st s^*s\le e\}\;.\]
This set is a Hilbert basis, and so is
the singleton $\sections=\{e\}$.
\end{example}

\begin{example}\label{exm:hilbsec3}
The condition $\V\sections_X=1$ of \ref{exm:hilbsec1} does not necessarily hold over more general quantales. In order to see this let $R$ be the unital involutive quantale whose involution is trivial and whose order and multiplication table are the following (\cf\ \cite[Example 4.21]{Re07}):
\[
\vcenter{\xymatrix{
&1\ar@{-}[dl]\ar@{-}[dr]\\
e\ar@{-}[dr]&&a\ar@{-}[dl]\\
&0
}}\hspace*{2cm}
\begin{array}{c|cccc}
	&  0 & e & a & 1 \\
	\hline
	0 &  0 & 0 & 0 & 0 \\
	e &  0 & e & a & 1 \\
	a &  0 & a & 1 & 1 \\
	1 &  0 & 1 & 1 & 1
	\end{array}
\]
If we regard $R$ as a Hilbert $R$-module with $\inner x y=xy^*$ then $R$ has enough sections (because it is a unital quantale) but $\sections_R=\{0,e\}$.
\end{example}

The existence of a Hilbert basis has useful consequences. In particular the inner product is necessarily non-degenerate:

\begin{lemma}\label{usefullemma}
Let $Q$ be an involutive quantale, let $X$ be a pre-Hilbert $Q$-module, and let $\sections\subset X$. If $\sections$ is a Hilbert basis then the following properties hold, for all $x,y\in X$.
\begin{enumerate}
\item\label{ul2} If $\inner x s=\inner y s$ for all $s\in \sections$ then $x=y$. (Hence, $X$ is a Hilbert module.)
\item\label{ul3} $\inner x y=\V_{s\in\sections}\inner x s\inner s y=\V_{s\in\sections}\inner x s\inner y s^*$. (``Parseval's identity''.)
\end{enumerate}
Conversely, $\sections$ is a Hilbert basis if $\inner--$ is non-degenerate and \ref{ul3} holds.
\end{lemma}

\begin{proof}
Assume that $\sections$ is a Hilbert basis. The two properties are proved as follows.
\begin{enumerate}
\item[{\rm \ref{ul2}.}] If $\inner x s=\inner y s$ for all $s\in \sections$ then
$x=\V_{s\in\sections}\inner x s s=\V_{s\in\sections}\inner y s s=y$.
\item[{\rm \ref{ul3}.}]
$\inner x y=\left\langle{\V_{s\in\sections}\inner x s s},y\right\rangle=\V_{s\in\sections}\inner x s\inner s y$.
\end{enumerate}
For the converse assume that $\inner - -$ is non-degenerate and that \ref{ul3} holds. Then for all $x,y\in X$ we have
\[\left\langle{\V_{s\in\sections}\inner x s s},y\right\rangle=\V_{s\in\sections}\inner x s\inner s y=\inner x y\;,\]
and by the non-degeneracy we obtain $\V_{s\in\sections}\inner x s s=x$. \qed
\end{proof}

\paragraph{Adjointable maps.}

Similarly to Hilbert C*-modules, the module homomorphisms which have ``operator adjoints'' play a special role:

\begin{definition}\textbf{(\cite{Paseka})}
Let $Q$ be an involutive quantale and let $X$ and $Y$ be pre-Hilbert $Q$-modules. A function \[\varphi:X\to Y\] is \emph{adjointable} if there is another function $\varphi^\dagger:Y\to X$ such that for all $x\in X$ and $y\in Y$ we have
\[\inner{\varphi(x)}y = \inner x{\varphi^\dagger(y)}\;.\]
(The notation for $\varphi^\dagger$ in \cite{Paseka} is $\varphi^*$, but we want to avoid confusion with the notation for inverse image homomorphisms of locale maps.)
\end{definition}

We note that if $\varphi$ is adjointable and $Y$ is a Hilbert $Q$-module (\ie, the bilinear form of $Y$ is non-degenerate) then $\varphi$ is necessarily a homomorphism of $Q$-modules \cite{Paseka}: we have
\begin{eqnarray*}
\left\langle \varphi\left(\V a_i x_i\right),y\right\rangle
&=&\left\langle \V a_i x_i,\varphi^\dagger(y)\right\rangle
=\V a_i\langle x_i, \varphi^\dagger(y)\rangle\\
&=&\V a_i\langle \varphi(x_i),y\rangle
=\left\langle\V a_i \varphi(x_i),y\right\rangle
\end{eqnarray*}
and thus by the non-degeneracy of $\inner--_Y$ we conclude that
\[\varphi\left(\V a_i x_i\right)=\V a_i \varphi(x_i)\;.\]

Conversely, and similarly to the situation in \cite{RR} where $Q$ was a locale, the homomorphisms of complete Hilbert $Q$-modules are necessarily adjointable. In order to prove this only the domain module need have enough sections:

\begin{theorem}
Let $Q$ be an involutive quantale and let $X$ and $Y$ be pre-Hilbert $Q$-modules such that $X$ has a Hilbert basis $\sections$ (hence, $X$ is a Hilbert module), and let $\varphi:X\to Y$ be a homomorphism of $Q$-modules. Then $\varphi$ is adjointable with a unique adjoint $\varphi^\dagger$, which is given by
\begin{eq}\label{adjoint}
\varphi^\dagger(y) = \V_{t\in\sections}\langle y,\varphi(t)\rangle t\;.
\end{eq}%
\end{theorem}

\begin{proof}
Let $x\in X$, $y\in Y$, and let us compute $\langle x,\varphi^\dagger(y)\rangle$ using (\ref{adjoint}):
\begin{eqnarray*}
\langle x,\varphi^\dagger(y)\rangle &=&\left\langle \V_{s\in\sections}\langle x,s\rangle s,\V_{t\in\sections}\langle y,\varphi(t)\rangle t\right\rangle\\
&=&
\V_{s,t\in\sections}\inner x s\inner s t\inner y{\varphi(t)}^*\\
&=&
\V_{t\in\sections}\inner x t\inner {\varphi(t)}y\\
&=&
\left\langle \V_{t\in\sections}\inner x t \varphi(t),y\right\rangle\\
&=&
\left\langle \varphi\left(\V_{t\in\sections}\inner x t t\right),y\right\rangle\\
&=&
\inner {\varphi(x)}y\;.
\end{eqnarray*}
This shows that $\varphi^\dagger$ is adjoint to $\varphi$, and the uniqueness is a consequence of the non-degeneracy of the inner product of $X$. \qed
\end{proof}

\begin{definition}
Let $Q$ be an involutive quantale. The \emph{category of complete Hilbert $Q$-modules}, denoted by $Q$-$\hmb$ (standing for `Hilbert Modules with Basis'), is the category whose objects are the complete Hilbert $Q$-modules and whose arrows are the homomorphisms of $Q$-modules (equivalently, the adjointable maps).
\end{definition}

\begin{corollary}
For any involutive quantale $Q$, $Q$-$\hmb$ is an involutive quantaloid whose involution is the strong self-duality $(-)^\dagger:\opp{(Q\textrm{-}\hmb)}\to Q\textrm{-}\hmb$.
\end{corollary}

\begin{example}\label{exm:BsheavesasHilbertmodules}
If $B$ is a frame, the category $B$-$\sh$ (\cf\ section \ref{sec:localesheavesasmodules}) coincides with $\maps(B$-$\hmb)$. Moreover, if $f:X\to Y$ is a map of $B$-sheaves we have $f_!=(f^{*})^{\dagger}$ \cite[Theorem 8]{RR}.
\end{example}

\subsection{Quantale-valued sets}\label{sec:mvhb}

Most of the definitions of sheaf for involutive quantales in the literature are based on generalizations of the notion of frame-valued set. In this section we take one such definition and show, for an arbitrary involutive quantale $Q$, that the category of $Q$-valued sets is equivalent to the category of maps of the quantaloid of complete Hilbert $Q$-modules.

\paragraph{Basic definitions.}

Let $Q$ be an involutive quantale, and let $I$ and $J$ be sets. By a \emph{$Q$-valued matrix of type $I\times J$} will be meant a mapping
\[A:I\times J\to Q\;.\]
Terminology and notation are analogous to those of linear algebra:
\begin{itemize}
\item $a_{\alpha\beta}$ or $(A)_{\alpha\beta}$ denotes the value $A(\alpha,\beta)$, and we refer to $\alpha$ and $\beta$ as the \emph{row} and \emph{column} indices, respectively;
\item if $I$ is a singleton we say that $A$ is a \emph{row matrix}, and if $J$ is a singleton we say that $A$ is a \emph{column matrix};
\item the \emph{transpose} of $A$ is the matrix $A^T$ defined by $(A^T)_{\alpha\beta}=a_{\beta\alpha}$, and the \emph{adjoint} of $A$ is the matrix $A^*$ defined by $(A^*)_{\alpha\beta}=a^*_{\beta\alpha}$;
\item if $B:J\times K\to Q$ is another matrix, the \emph{product} $AB$ is defined by \[(AB)_{\alpha\gamma} = \V_{\beta\in J}a_{\alpha\beta}b_{\beta\gamma}\;.\]
\end{itemize}

In accordance with these conventions, we shall often think of a mapping $\vect v: I\to Q$ as being a ``vector'' (\cf\ \ref{exm:functionmodule}), in particular writing $v_\alpha$ instead of $\vect v(\alpha)$ and adopting the following notation and terminology with respect to a matrix $A:I\times J\to Q$:
\begin{itemize}
\item $\vect v A:J\to Q$ is the mapping whose components are defined by \[(\vect v A)_{\beta}=\V_{\alpha\in I} v_{\alpha}a_{\alpha\beta}\;;\]
that is, $\vect v$ is always regarded as being a row matrix $\{*\}\times I\to Q$;
\item if $\alpha\in I$, the \emph{$\alpha$-row} of $A$ is the mapping $\tilde\alpha:I\to Q$ defined by $\tilde\alpha_{\beta}=a_{\alpha\beta}$;
\item if $\beta\in J$, the \emph{$\beta$-column} of $A$ is the mapping $\tilde\beta:J\to Q$ defined by $\tilde\beta_{\alpha}=a_{\alpha\beta}$.
\end{itemize}

The following definition stems from some of the early works on categories of relations \citelist{\cite{Allegories}*{\S 2.226} \cite{Pitts}*{Prop.\ 2.6}}. We adapt it from Garraway \cite{Garraway}, who applies it to non-unital involutive quantaloids rather than just involutive quantales. (The same definition has been used earlier by Gylys \cite{G01} for unital involutive quantaloids.)

\begin{definition}\label{Qsets}
Let $Q$ be an involutive quantale. By a \emph{$Q$-set} is meant a set $I$ together with a matrix
\[A:I\times I\to Q\]
which is both \emph{self-adjoint} ($A=A^*$) and \emph{idempotent} ($A A=A$).
\end{definition}

The matrix entry $a_{\alpha\beta}$ of a $Q$-set $(I,A)$ can be regarded as the generalized truth-value of the equality $\alpha=\beta$, and the diagonal entry $a_{\alpha \alpha}$ is regarded as the extent to which the element $\alpha $ exists. The fact that $A$ is required to be a projection matrix reflects the fact that equality should be a partial equivalence relation.
These ideas lead naturally to the following definition of a $Q$-valued relation between $Q$-sets (another common name for this is \emph{distributor} or \emph{bimodule}):

\begin{definition}
Let $Q$ be an involutive quantale and let $X=(I,A)$ and $Y=(J,B)$ be $Q$-sets.
A \emph{relation} \[R:X\relto Y\] is a matrix $R:J\times I\to Q$ such that the following equations hold:
\begin{eq}
BR = R = RA\;. \label{projectionsplitting}
\end{eq}%
We shall denote by $\Rel(Q)$ the quantaloid whose objects are the $Q$-sets and whose morphisms are the relations between them: composition is given by matrix multiplication and the identity relation on a $Q$-set $(I,A)$ is $A$.
\end{definition}

From this a notion of map immediately follows (\cf\ \ref{mapsininvqs}):

\begin{definition}
Let $Q$ be an involutive quantale. The \emph{category of $Q$-sets} $\sets(Q)$ is defined to be $\Map(\Rel(Q))$. Hence, explicitly, a \emph{map} of $Q$-sets $F:(I,A)\to (J,B)$ is a relation such that the following two additional conditions hold:
\begin{eqarray}
F F^*&\le& B \label{singlevalued}\\
A&\le& F^* F\;. \label{totalfunction}
\end{eqarray}%
(Note: we write $\relto$ for general relations and $\to$ for maps.)
\end{definition}

\paragraph{Matrices versus modules.}
Every Hilbert $Q$-module $X$ with a Hilbert basis $\sections$ has an associated matrix $A:\sections\times\sections\to Q$ defined by $a_{st}=\inner s t$ (this is analogous to the metric of an Euclidian space with respect to a chosen basis). This defines a $Q$-set $(\sections,A)$ because we have $A=A^*$ by definition of the inner product, and $A=A^2$ by ``Parseval's identity'' (\cf\ \ref{usefullemma}-\ref{ul3}). Conversely, we have:

\begin{lemma}\label{lemma:matrixtomod}
Let $Q$ be an involutive quantale. For any $Q$-set $(I,A)$ the subset of $Q^I$ defined by
\[Q^I A=\{\vect v A\st \vect v\in Q^I\}\]
is a Hilbert $Q$-module whose inner product is the dot product of $Q^I$ (\cf\ \ref{exm:functionmodule}),
\[\langle\vect v,\vect w\rangle = \vect v\cdot\vect w=\vect v\vect w^* = \V_{\alpha\in I} v_{\alpha} w_{\alpha }^*\;,\]
it has a Hilbert basis $\sections$ consisting of all the rows of $A$, and for all $\alpha,\beta\in I$ and $\vect v\in Q^I$ we have
\begin{eqarray}
\left\langle \vect v,\tilde \beta\right\rangle &=& (\vect v A)_{\beta}\;, \label{proj}\\
\left\langle{\tilde \alpha},{\tilde \beta}\right\rangle &=& a_{\alpha\beta}\;.\label{matrixentry}
\end{eqarray}%
\end{lemma}

\begin{proof}
The assignment $j:\vect v\mapsto \vect v A$ is a $Q$-module endomorphism of $Q^I$, and $Q^I A$ is its image, hence a submodule of $Q^I$. Next note that $\sections$ is a subset of $Q^I A$ because for each $\alpha\in I$ we have
$\tilde \alpha =\tilde \alpha A\in Q^I A$:
\[\tilde \alpha_\beta=a_{\alpha\beta}=(A^2)_{\alpha\beta}=\V_{\gamma\in I} a_{\alpha\gamma} a_{\gamma\beta} = \V_{\gamma\in I} \tilde \alpha_{\gamma} a_{\gamma\beta} =(\tilde \alpha A)_\beta \;.\]
Now we prove (\ref{proj}):
\[\left\langle \vect v,\tilde \beta\right\rangle=\vect v \cdot{\tilde \beta}=\V_{\gamma} v_{\gamma}\tilde \beta_{\gamma}^*=\V_{\gamma} v_{\gamma} a_{\beta\gamma}^*=\V_{\gamma} v_{\gamma} a_{\gamma\beta}=(\vect v A)_\beta\;.\]
In particular, if $\vect v\in Q^I A$ we have $\vect v A=\vect v$, hence $\left\langle \vect v,\tilde \beta\right\rangle=v_\beta$, and (\ref{matrixentry}) is an immediate consequence:
\[\left\langle \tilde \alpha,\tilde \beta\right\rangle = \tilde \alpha_\beta = a_{\alpha\beta}\;.\]
Finally, $\sections$ is a Hilbert basis because for all $\vect v\in Q^I A$ we have
\[\left(\V_{\beta} \left\langle \vect v,\tilde \beta\right\rangle\tilde \beta\right)_{\alpha} = \left(\V_{\beta} v_{\beta}\tilde \beta\right)_{\alpha}
=\V_{\beta} v_{\beta} \tilde \beta_{\alpha} = \V_{\beta} v_{\beta} a_{\beta \alpha} = (\vect v A)_{\alpha }=v_{\alpha}\;. \qed\]
\end{proof}

\begin{theorem}\label{equivrelQQhmb}
Let $Q$ be an involutive quantale. The involutive quantaloids $\Rel(Q)$ and $Q$-$\hmb$ are equivalent.
\end{theorem}

\begin{proof}
For each complete Hilbert $Q$-module $X$ let $G(X)=(\sections_X,A_X)$ be the $Q$-set defined by $(A_X)_{st}=\inner s t$, and for each homomorphism $\varphi:X\to Y$ in $Q$-$\hmb$ let $G(\varphi):\sections_Y\times\sections_X\to Q$ be the matrix defined by
$(G(\varphi))_{st} = \inner s{\varphi(t)}_Y$.
It is straightforward to see that these assignments define a faithful homomorphism of quantaloids
$G:Q\textrm{-}\hmb\to \Rel(Q)$
(in particular faithfulness is a consequence of the non-degeneracy of the inner products), and we prove only that $G$ preserves composition: for all homomorphisms $\varphi:X\to Y$ and $\psi:Y\to Z$ we have
\begin{eqnarray*}
(G(\psi\circ\varphi))_{st} &=& \inner s{\psi(\varphi(t))}_Z=\inner{\psi^\dagger(s)}{\varphi(t)}_Y\\
&=&\V_{u\in\sections_Y}\inner{\psi^\dagger(s)}u_Y\inner u{\varphi(t)}_Y=\V_u\inner s{\psi(u)}_Z\inner u{\varphi(t)}_Y\\
&=&\V_u (G(\psi))_{su}(G(\varphi))_{ut}=
(G(\psi)G(\varphi))_{st}\;.
\end{eqnarray*}
We remark that this also shows that $G(\varphi)$ is a morphism in $\Rel(Q)$:
\[A_Y G(\varphi)=G(\ident_Y)G(\varphi)=G(\ident_Y\circ\varphi)=G(\varphi)=G(\varphi)G(\ident_X)=G(\varphi)A_X\;.\]

Next we show, for an arbitrary $Q$-set $(I,A)$, that $(I,A)\cong G(Q^I A)$. Writing $\vect X$ for $Q^I A$, let $R:\sections_{\vect X}\times I\to Q$ be the matrix given by $r_{\vect\sigma \alpha} = \inner{\vect\sigma} {\tilde \alpha}=\sigma_{\alpha }$. This defines a relation $R:(I,A)\relto G(\vect X)$:
\[\begin{array}{rcl}
(A_{\vect X} R)_{\vect\sigma \alpha}&=& \V_{\vect\tau\in\sections_{\vect X}} \inner{\vect\sigma}{\vect\tau}\tau_{\alpha }=
\left(\V\nolimits_{\vect\tau} \inner{\vect\sigma}{\vect\tau}\vect\tau\right)_{\alpha }=\sigma_{\alpha }=r_{\vect\sigma \alpha}\;,\\
~\\
(R A)_{\vect\sigma \alpha} &=& \V_{\beta\in I} \inner{\vect\sigma} {\tilde \beta} a_{\beta \alpha}=
\V\nolimits_{\beta} \inner{\vect\sigma} {\tilde \beta} \tilde \beta_{\alpha }=\left(\V\nolimits_{\beta}
\inner{\vect\sigma} {\tilde \beta} \tilde \beta\right)_{\alpha }\\
&=&\sigma_{\alpha }=r_{\vect\sigma \alpha}\;.
\end{array}\]
The above two lines are justified, respectively, because $\sections_{\vect X}$ and $\{\tilde \alpha\st \alpha\in I\}$ are Hilbert bases of ${\vect X}$. The same facts show that $R$ is a unitary map:
\[\begin{array}{rcl}
(RR^*)_{\vect\sigma \vect\tau} &=&
\V_{\alpha\in I} \inner{\vect\sigma}{\tilde \alpha}\inner{\vect\tau}{\tilde \alpha}^*
=\V\nolimits_{\alpha} \inner{\vect\sigma}{\tilde \alpha}\inner{\tilde \alpha}{\vect\tau}=\inner{\vect\sigma}{\vect\tau}=(A_{\vect X})_{\vect\sigma\vect\tau}\;,\\
~\\
(R^*R)_{\alpha\beta}&=& \V\nolimits_{\vect\sigma\in\sections_X} \inner{\tilde \alpha}{\vect\sigma}\inner{\vect\sigma}{\tilde \beta}
=\inner{\tilde \alpha}{\tilde \beta}=a_{\alpha\beta}
\;.
\end{array}\]

Now let us prove that $(R,\vect X)$ is a universal arrow from $(I,A)$ to $G$. Let $Y$ be a complete Hilbert $Q$-module and let $H:(I,A)\relto G(Y)$. 
Define a mapping $\varphi:{\vect X}\to Y$ as follows:
\begin{eq}\label{adjunctformula}
\varphi(\vect v) = \V_{\scriptsize \begin{array}{l}s\in I\\ t\in\sections_Y\end{array}} v_s h_{ts}^* t\;.
\end{eq}%
This is a homomorphism of left $Q$-modules because $\vect v A=\vect v$ and, by (\ref{proj}), $v_s=\inner{\vect v}{\tilde s}$. In order to prove the universal property we only need to show (because $R$ is an isomorphism and $G$ is faithful) that $G(\varphi)R=H$: for all $\alpha\in\sections_Y$ and all $\beta\in I$ we have
\begin{eqnarray*}
(G(\varphi)R)_{\alpha\beta} &=& \V_{\vect\sigma\in\sections_{\vect X}}\inner\alpha{\varphi(\vect\sigma)}\inner{\vect\sigma}{\tilde\beta} = \V_{\vect\sigma\in\sections_{\vect X}}\inner{\varphi^\dagger(\alpha)}{\vect\sigma}\inner{\vect\sigma}{\tilde\beta}\\
&=& \inner{\varphi^\dagger(\alpha)}{\tilde\beta}=
\inner\alpha{\varphi(\tilde\beta)}\\
&=& \left\langle \alpha,\V_{\scriptsize \begin{array}{l}s\in I\\ t\in\sections_Y\end{array}} \tilde\beta_s h_{ts}^* t\right\rangle = \V_{\scriptsize \begin{array}{l}s\in I\\ t\in\sections_Y\end{array}}\langle \alpha,\tilde\beta_s h_{ts}^* t\rangle \\
&=&\V_{\scriptsize \begin{array}{l}s, t\end{array}}\langle \alpha,t\rangle h_{ts}\tilde\beta_s^*=
\V_{\scriptsize \begin{array}{l} s, t\end{array}}\langle \alpha,t\rangle h_{ts}\inner{\tilde s}{\tilde\beta}\\
&=&(A_Y H A)_{\alpha\beta}=h_{\alpha\beta}\;.
\end{eqnarray*}

We thus conclude that $G$ has a left adjoint and that the unit of the adjunction is a unitary map. Therefore, in order to establish the desired equivalence we only need to prove that the co-unit is an isomorphism (\cf\ comments after \ref{quantaloidequiv}). So let $X$ be an arbitrary complete Hilbert $Q$-module, and,  for simplicity, let us write $G(X)=(\sections,A)$ instead of $(\sections_X,A_X)$. Let $\varphi:Q^{\sections}\to X$ be the $Q$-module quotient defined by $\varphi(\vect v) = \V_{s\in \sections} v_s s$. By \ref{usefullemma} we have,  for all $\vect v,\vect w\in Q^\sections$, the following equivalences:
\begin{eqnarray*}
\varphi(\vect v)=\varphi(\vect w) &\iff& \forall_{t\in\sections}\ \inner{\varphi(\vect v)}t=\inner{\varphi(\vect w)}t\\
&\iff&\forall_{t\in\sections}\ \left\langle\V\nolimits_{s\in \sections} v_s s,t\right\rangle = \left\langle\V\nolimits_{s} w_s s,t\right\rangle\\
&\iff& \forall_{t\in\sections}\ \V\nolimits_{s} v_s\inner s t=\V\nolimits_{s} w_s\inner s t\\
&\iff& \forall_{t\in\sections}\  (\vect v A)_t=(\vect w A)_t\\
&\iff& \vect v A=\vect w A\;.
\end{eqnarray*}
Hence, $\varphi$ factors uniquely through the quotient $\vect v\mapsto \vect v A : Q^{\sections}\to Q^\sections A$ and an isomorphism of $Q$-modules $Q^\sections A\stackrel\cong\to X$, which is the $X$-component of the co-unit of the adjunction. \qed
\end{proof}

\begin{corollary}
For any involutive quantale $Q$, the category $\sets(Q)$ is equivalent to $\maps(Q$-$\hmb)$  (\cf\ \ref{mapsininvqs}).
\end{corollary}

\subsection{Supported modules}\label{sec:ssq}

Let us provide an independent study of Hilbert modules on supported quantales. This has two purposes: one is to achieve a better understanding of how the various axioms interact with each other; and the other is that by doing so one is paving the way for obtaining possible extensions of the theory developed in this paper in a way that may be applicable to theories of sheaves on supported quantales that are more general than inverse quantal frames. As an example, we mention the Lindenbaum quantales for propositional normal modal logic of \cite{MarcR}, which are stably supported and whose sheaves may provide good semantic grounds for interpreting non-propositional modal logic.

\paragraph{Modules on supported quantales.}

We begin with a simple but useful property of arbitrary modules on supported quantales:

\begin{lemma}\label{lem:modprop}
Let $Q$ be a supported quantale and let $X$ be a left $Q$-module. Then for all $a\in Q$ we have
\[a1_X=\spp(a)1_X=aa^*1_X=aa^*a1_X=aa^*aa^*1_X\;.\]
\end{lemma}

\begin{proof}
Let $a\in Q$. The axioms of supported quantales give us
\begin{eqnarray*}
a1_X&\le&\spp(a)a1_X\le\spp(a)1_X\le aa^*1_X\le\spp(a)aa^*1_X\le aa^*aa^*1_X\le aa^*a1_X\\
&\le& a1_X\;. \qed
\end{eqnarray*}
\end{proof}

Let us introduce a notion of support for modules that is formally similar to that of quantales if we replace $ab^*$ by $\inner a b$ (however, we require supports to be only monotone instead of sup-preserving --- \cf\ \ref{rem:weakspp}):

\begin{definition}
Let $Q$ be a supported quantale with base locale $B$ (\cf\ \ref{def:baselocale}). By a \emph{supported $Q$-module} is meant a pre-Hilbert $Q$-module $X$ equipped with a monotone map
\[\spp: X\to B\;,\]
called the \emph{support} of $X$, such that the following properties hold for all $x\in X$:
\begin{eqnarray*}
\spp (x) &\le& \langle x,x\rangle\\
x &\le& \spp (x) x\;.
\end{eqnarray*}
\end{definition}

\begin{example}
Any supported quantale $Q$ defines a supported module over itself, with $\langle a,b\rangle=ab^*$.
\end{example}

The existence of the base locale $B$ enables us to define a notion of local section in analogy to that of local homeomorphisms regarded as $B$-modules (\cf\ sections \ref{sec:localesheavesasmodules} and \ref{sec:actionsonsheaves}), since the action of $Q$ restricts to an action of $B$ making $X$ a supported $B$-module:

\begin{definition}\label{exm:localsection}
Let $Q$ be a supported quantale and let $X$ be a supported $Q$-module. By a \emph{local section} of $X$ is meant an element $s\in X$ such that $\spp(x)s=x$ for all $x\le s$. The set of local sections of $X$ is denoted by $\sections_X^\ell$.
\end{definition}
But, as we shall see, in the cases of interest later in the paper the local sections coincide with the Hilbert sections due to the following proposition:

\begin{lemma}\label{lem:localsectionsprops}
Let $Q$ be a supported quantale and let $X$ be a supported $Q$-module.
\begin{enumerate}
\item\label{equivdefls} $\sections_X^\ell=\{s\in X\st \spp(x\land s)s\le x\textrm{ for all }x\in X\}$.
\item\label{numbertwo} $\sections_X^\ell$ is downwards closed.
\item\label{numberthree} $\sections_X\subset \sections_X^\ell$.
\item\label{hilbeqloc} $\sections_X=\sections_X^\ell$ if and only if every local section $s$ is a join $s=\V_i t_i$ of Hilbert sections.
\end{enumerate}
\end{lemma}

\begin{proof}
\ref{equivdefls} is proved in the same way as the equivalence of (\ref{eq:locsec1}) and (\ref{eq:locsec2}) in \ref{lem:firsthilbertsections}: if $s$ is a local section and $x\in X$ then $x\land s\le s$ and thus we have
$\spp(x\land s)s=x\land s\le x$; conversely, if $s$ satisfies $\spp(x\land s)s\le x$ for all $x\in X$ then if $x\le s$ we have
$x=\spp(x) x\le\spp(x)s=\spp(x\land x) s\le\spp(x\land s) s\le x$. Now \ref{numbertwo} is an immediate consequence because if $s$ is a local section and $t\le s$ we have $\spp(x\land t)t\le\spp(x\land s)s\le x$. Similarly, \ref{numberthree} follows from the inequality $\spp(x\land s)\le\inner x s$.
In order to prove the nontrivial inclusion in \ref{hilbeqloc} let $s$ be a local section and let $I\subset\sections_X$ be such that $s=\V I$. Let $t$ and $u$ be arbitrary elements of $I$. For all $x\in X$ we have $\spp(x\land s) s\le x$, and thus also
$\spp( x\land t) u\le x$. In particular, making $x=t$ we obtain $\spp(t)u\le t$. The conclusion that $s$ is a Hilbert section follows immediately, since for all $x\in X$ we have
$\inner x s s=\V_{t,u\in I}\inner x t u$ and
$\inner x t u=\inner x{\spp(t)t}u=\inner x t\spp(t)u\le\inner x t t\le x$. \qed
\end{proof}

\paragraph{Stably supported modules.}
The notion of stable support for quantales has an equally useful analogue for modules:

\begin{definition}
Let $Q$ be a supported quantale and let $X$ be a supported $Q$-module. The support is called \emph{stable}, and the module is said to be \emph{stably supported}, if in addition the support is $B$-equivariant; that is, the following condition holds for all $b\in B$ and $x\in X$:
\[\spp(bx) = b\land\spp(x)\;.\]
\end{definition}

\begin{example}
Any stably supported quantale $Q$ is itself a stably supported $Q$-module, with $\langle a,b\rangle=ab^*$.
\end{example}

\begin{lemma}
Let $Q$ be a supported quantale and let $X$ be a stably supported $Q$-module. The map $b\mapsto b1_X$ from $B$ to $X$ is right adjoint to $\spp:X\to B$. (In particular, $\spp$ preserves joins.)
\end{lemma}

\begin{proof}
The unit of the adjunction follows from $x\le \spp(x)x\le\spp(x)1_X$, and the co-unit follows from the $B$-equivariance:
$\spp(b1_X)=b\land\spp(1_X)\le b$. \qed
\end{proof}

There are several alternative definitions of stability:

\begin{lemma}\label{lem:adjoint}
Let $Q$ be a supported quantale and let $X$ be a supported $Q$-module. The following conditions are equivalent:
\begin{enumerate}
\item\label{lem:stabclassical1} $\spp(ax) = \spp(a\spp( x))$ for all $a\in Q$ and $x\in X$;
\item\label{lem:stabclassical2} $\spp(ax)\le\spp (a)$ for all $a\in Q$ and $x\in X$;
\item\label{lem:stabclassical3} $\spp(a1_X)\le\spp (a)$ for all $a\in Q$;
\item\label{lem:Bequiv} The support of $X$ is stable.
\end{enumerate}
\end{lemma}

\begin{proof}
\ref{lem:stabclassical2} and \ref{lem:stabclassical3} are of course equivalent. Let us prove the equivalence of \ref{lem:stabclassical1} and \ref{lem:stabclassical2}. First assume \ref{lem:stabclassical1}. Then for all $a\in Q$ and $x\in X$ we have
\[\spp(ax) = \spp(a\spp (x)) \le \spp(ae) = \spp (a)\;.\]
Now assume \ref{lem:stabclassical2}. Then we have
\begin{eqnarray*}
\spp(ax)&\le&\spp(a\spp (x) x)\le\spp(a\spp (x))\le\spp(a\langle x,x\rangle)
=\spp(\langle ax,x\rangle)\\
&\le&\spp(\langle\spp(ax)ax,x\rangle)
=\spp(\spp(ax)\langle ax,x\rangle)\le\spp(\spp(ax))\\
&=&\spp(ax)\;,
\end{eqnarray*}
and thus \ref{lem:stabclassical1} holds. Now assume again \ref{lem:stabclassical1}, and let $b\in B$. Then
\[\spp(bx)=\spp(b\spp(x))=\spp(b\land\spp(x))=b\land\spp(x)\;,\]
and thus we see that \ref{lem:stabclassical1} implies \ref{lem:Bequiv}. Finally, assume that \ref{lem:Bequiv} holds. Then for all $a\in Q$ we have, using \ref{lem:modprop},
\[\spp(a1_X)=\spp(\spp(a)1_X)=\spp(a)\land\spp(1_X)\le\spp(a)\;,\]
and thus \ref{lem:Bequiv} implies \ref{lem:stabclassical3}. \qed
\end{proof}

\begin{remark}\label{previouslyunnoticed}
Since a stably supported quantale is a stably supported module over itself and \ref{lem:adjoint}-\ref{lem:stabclassical1} translates to the definition of stability for supported quantales [\cf\ (\ref{spp4}], it follows that a supported quantale $Q$ is stably supported if and only if its support is $B$-equivariant (\cf\ \ref{thm:stabiffequiv}). This fact is not mentioned in \cite{Re07}.
\end{remark}

\paragraph{Supported modules on stably supported quantales.}

In order for some of the good properties of stable supports of quantales to carry over to supports of modules we need the quantale $Q$ itself to be stably supported. First we observe the following:

\begin{lemma}\label{lem:necstab}
Let $Q$ be a stably supported quantale. Any supported $Q$-module is necessarily stably supported.
\end{lemma}

\begin{proof}
For all $a\in Q$ and $x\in X$ we necessarily have $\spp(ax)\le\spp (a)$, as the following sequence of (in)equalities shows,
\[\spp(ax)\le\langle ax,ax\rangle\wedge e=a\langle x,ax\rangle\wedge e\le a1_Q\wedge e=\spp (a)\;,\]
where the latter equality is (\ref{spp6}). \qed
\end{proof}

We obtain similar properties to those of stable supports of quantales regarding uniqueness of supports, and analogous formulas for $\spp$ when we formally replace $ab^*$ by $\langle a,b\rangle$:

\begin{lemma}
Let $Q$ be a stably supported quantale and let $X$ be a (necessarily stably) supported $Q$-module.
The following properties hold:
\begin{enumerate}
\item\label{lem:stabprop1} For all $x,y\in X$ we have $\spp(\langle x,y\rangle)\le\spp (x)=\spp(\langle x,x\rangle)=\spp(\langle x,1\rangle)$.
\item\label{lem:stabprop5} For all $x\in X$ and $a\in Q$ we have $\spp (x)a=\langle x,1\rangle\wedge a$.
\item\label{lem:stabprop6} For all $x\in X$ we have $\spp (x)=\langle x,1\rangle\wedge e$.
\item\label{lem:stabprop4} For all $x\in X$ we have $\spp (x)=\langle x,x\rangle\wedge e$.
\item $X$ does not admit any other support.
\end{enumerate}
\end{lemma}

\begin{proof}
First we prove \ref{lem:stabprop1}. Let $x,y\in X$. Using the stability of the support of $Q$ we have
\[\spp (\langle x,y\rangle)\le\spp(\langle\spp (x) x,y\rangle)
=\spp (\spp (x)\langle x,y\rangle)\le\spp(\spp (x)) =\spp (x)\;.\]
On the other hand, using the inequality just proved we obtain
\[\spp (x) = \spp(\spp (x)) \le \spp(\langle x,x\rangle)\le\spp(\langle x,1\rangle)\le\spp (x)\;,\]
thus proving  \ref{lem:stabprop1}.
Now let us prove \ref{lem:stabprop5}. We have $\spp(x)a\le\inner x 1$ because
\[\spp(x)a\le\inner x x 1\le\inner x 1 1=\inner x{1^*1}=\inner x 1\;.\]
Since we also have $\spp(x)a\le a$ we obtain the inequality
\[\spp(x)a\le \inner x 1\land a\;,\]
and the converse inequality is proved as follows, using \ref{lem:stabprop1}:
\[\inner x 1\land a\le\spp(\inner x 1\land a)(\inner x 1\land a)\le \spp(\inner x 1) a
=\spp(x)a\;.\]
Making $a=e$ we obtain \ref{lem:stabprop6} (which immediately implies that the support of $X$ is unique), and for \ref{lem:stabprop4} it suffices to prove the inequality
$\inner x x\land e\le\spp(x)$, again using \ref{lem:stabprop1}:
\[\inner x x\land e=\spp(\inner x x\land e)\le\spp(\inner x x)\land\spp(e)=\spp(x)\land e=\spp(x)\;. \qed\]
\end{proof}

One consequence of this is that the existence of a support (when $Q$ is stably supported) is a property of a pre-Hilbert $Q$-module rather than extra structure. In fact this uniqueness is even ``pointwise'', in the following sense:

\begin{lemma}\label{lem:stabprop2}
Let $Q$ be a stably supported quantale, let $X$ be a (necessarily stably) supported $Q$-module, and let
$x\in X$ and $b\in B$ be such that
\begin{eqnarray*}
b &\le & \langle x,x\rangle\;,\\
x &\le & bx\;.
\end{eqnarray*}
Then we necessarily have $b=\spp (x)$.
\end{lemma}

\begin{proof}
Using the $B$-equivariance of the support of $X$ we obtain
\[\spp (x) \le \spp(bx) =b\land\spp(x)\le b\;,\]
and, conversely,
$b=\spp (b)\le\spp(\langle x,x\rangle)=\spp (x)$. \qed
\end{proof}

Finally, if there exists a Hilbert basis we obtain:

\begin{theorem}
Let $Q$ be a stably supported quantale. Any complete Hilbert $Q$-module is a (necessarily stably) supported $Q$-module.
\end{theorem}

\begin{proof}
Define $\spp(x)=\langle x,x\rangle\land e$ for all $x\in X$, and let $\sections$ be a Hilbert basis of $X$. By definition, in order to verify that $\spp$ is a support it only remains to be seen that $x\le\spp(x)x$ for all $x\in X$. Let then $s\in\sections$. We have, using the properties of the stable support of $Q$,
\begin{eqnarray*}
\langle x,s\rangle&=&\spp(\langle x,s\rangle)\langle x,s\rangle
=(\langle x,s\rangle\langle x,s\rangle^*\land e)\langle x,s\rangle\\
&\le&(\langle x,x\rangle\land e)\langle x,s\rangle
=\spp(x)\langle x,s\rangle\\
&=& \langle\spp(x)x,s\rangle\;.
\end{eqnarray*}
Hence, we have $x\le\spp(x)x$ due to \ref{usefullemma}, and by \ref{lem:necstab} we conclude that $X$ is stably supported. \qed
\end{proof}

\subsection{Groupoid sheaves}\label{sec:backgrpdshvs}

Now we achieve the main aim of this paper, which is to show that if $G$ is an \'etale groupoid then the classifying topos $BG$ is equivalent to $\sets(\opens(G))$ or, equivalently, to $\maps(\opens(G)$-$\hmb)$. We shall do this by showing that the $\opens(G)$-sheaves of section \ref{sec:actionsonsheaves} coincide with the complete Hilbert $\opens(G)$-modules.

\paragraph{From complete Hilbert $Q$-modules to $Q$-sheaves.} We begin by recalling an observation from \cite[Example 4.7-3]{HS2}:

\begin{lemma}\label{HS2lemma}
Let $Q$ be an inverse quantal frame. If $X$ is a complete Hilbert $Q$-module then $\V\sections_X=1$.
\end{lemma}
In order to bypass differences in notation and make this paper more self-contained we include the proof here:
\begin{proof}
First, if $s\in\ipi(Q)$ and $t\in\sections_X$ then $st\in\sections_X$ because for all $x\in X$ we have
\[\langle x,st\rangle st = \langle x,t\rangle s^*st\le\langle x,t\rangle t\le x\;.\]
Hence, for all $t\in\sections_X$ we have $1_Q t=\V_{s\in\ipi(Q)} st\le\V\sections_X$, and thus
\[1_X=\V_{t\in\sections_X}\langle 1_X,t\rangle t\le\V_{t\in\sections_X}1_Q t\le\V\sections_X\;. \qed\]
\end{proof}

\begin{theorem}\label{thm:Qsheaf}
Let $Q$ be an inverse quantal frame. Every complete Hilbert $Q$-module is a $Q$-sheaf.
\end{theorem}

\begin{proof}
Let $X$ be an arbitrary but fixed complete Hilbert $Q$-module, and let us just write $\sections$ for the Hilbert basis $\sections_X$. The action restricts to an action of the base locale $B\subset Q$ and we shall prove that this defines a $B$-sheaf by showing that it has a Hilbert $B$-module structure with respect to which $\sections$ is a Hilbert basis. First, by the \emph{local inner product} will be meant the operation
$\inner--^\ell:X\times X\to B$
defined by
\begin{eq}\label{localinnerproduct}
\inner x y^\ell = \inner x y\land e\;.
\end{eq}%
The operation $\langle -,-\rangle^\ell$ is of course symmetric, and it preserves joins in the left variable because $\langle-,-\rangle$ does and $Q$ is a locale:
\[\left\langle\V_i x_i,y\right\rangle^\ell = \left\langle\V_i x_i,y\right\rangle\land e
=\left(\V_i\inner{x_i}y\right)\land e = \V_i \inner{x_i}y\land e=\V_i{\inner{x_i}y}^\ell\;.\]
We show that $\langle -,-\rangle^\ell$ is $B$-equivariant in the left variable, using \ref{prop:sqf}:
\[\langle bx,y\rangle^\ell = \langle bx,y\rangle\wedge e=
b\langle x,y\rangle\wedge e = b\wedge\langle x,y\rangle = b\wedge e\wedge\langle x,y\rangle = b\land \langle x,y\rangle^\ell\;.\]
Hence, $X$ together with the local inner product is a pre-Hilbert $B$-module. In order to see that this is a $B$-sheaf let us prove that $\sections$ is itself a Hilbert basis for the pre-Hilbert $B$-module structure; that is, we shall prove that for all $x\in X$ we have $x=\V_{s\in\sections} \inner x s^\ell s$. One inequality is trivial:
\[x=\V_{s\in\sections}\inner x s s\ge\V_{s\in\sections}(\inner x s\land e)s=\V_{s\in\sections}\inner x s^\ell s\;.\]
In order to prove the other inequality, first we apply (\ref{spp9}) with $b=\inner x s^\ell$ and $a=\inner s t$:
\begin{eqarray}
\left\langle{\V_{s\in\sections}\inner x s^\ell s},t\right\rangle&=&
\V_{s\in\sections} \inner x s^\ell\inner s t=\V_{s\in\sections}\inner x s^\ell 1\land\inner s t \\
&=&\V_{s\in\sections}(\inner x s\land e)1\land\inner s t\;.\label{tobecont}
\end{eqarray}%
Now recall the inequality (\ref{sqfprop}):
\[
(a\wedge e)1 \ge \V_{yz^*\le a} y\wedge z\;.
\]
Applying this to the right hand side of (\ref{tobecont}) we obtain
\begin{eq}\label{continued}
\V_{s\in\sections}(\inner x s\land e)1\land\inner s t\ge
\V_{s\in\sections}\V_{yz^*\le\inner x s} y\land z\land\inner s t\;.
\end{eq}%
A particular choice of $y$ and $z$ for which $yz^*\le\inner x s$ is to take $y=\inner x t$ and $z=\inner s t$, and thus with these values of $y$ and $z$ the right hand side of (\ref{continued}) is greater than or equal to
\begin{eqnarray*}
\V_{s\in\sections}y\land z\land\inner s t&=&\V_{s\in\sections}\inner x t\land\inner s t\land\inner s t
=\V_{s\in\sections}\inner x t\land\inner s t\\
&=&
\inner x t\land\V_{s\in\sections}\inner s t
=\inner x t\land\left\langle\V_{s\in\sections} s,t\right\rangle\\
&=&\inner x t\land\inner 1 t=\inner x t\;,
\end{eqnarray*}
where the transition to the last line follows from \ref{HS2lemma}.
Hence, we have concluded that $\left\langle\V_{s\in\sections}\inner x s^\ell s,t\right\rangle\ge\inner x t$ for all $t\in\sections$, which finally gives us:
\[\V_{s\in\sections}\inner x s^\ell s\ge x\;. \qed\]
\end{proof}

\paragraph{From $Q$-sheaves to complete Hilbert $Q$-modules.}

We begin with a simple technical lemma:

\begin{lemma}\label{iqaction}
Let $Q$ be an inverse quantal frame and let $X$ be a $Q$-sheaf. The action of $Q$ on $\opens(X)$ restricts to a monoid action of $\ipi(Q)$ on the set of local sections $\sections_X$.
\end{lemma}

\begin{proof}
Let $t\in\sections_X$ and $s\in \ipi(Q)$. We want to prove that $st\in\sections_X$. So let $x\le st$ and let us show that $x=\spp(x)st$. First we note that
$s^*x\le s^*st\le t$, and thus
\begin{eq}\label{littleq1}
\spp(s^*x)t=s^*x
\end{eq}%
because $t$ is a section. Secondly, we have $\spp(x)\le\spp(st)\le\spp(s)=ss^*$, and thus
\begin{eq}\label{littleq2}
ss^*x=x\;.
\end{eq}%
Hence,
since $s=ss^*s$ and both $\spp(x)$ and $ss^*$ belong to the base locale $B\subset Q$, applying (\ref{littleq1})--(\ref{littleq2}) and the equality $\spp(s^* x)=s^*\spp(x)s$ from \ref{lem:openGbundle}-\ref{bspp3} we obtain
\[\spp(x)st=\spp(x)ss^*st=ss^*\spp(x)st=s\spp(s^*x)t=ss^*x=x\;. \qed\]
\end{proof}

\begin{theorem}\label{thm:sheaveseqetales}
Let $Q$ be an inverse quantal frame. Every $Q$-sheaf is a complete Hilbert $Q$-module.
\end{theorem}

\begin{proof}
Let $X$ be an arbitrary but fixed $Q$-sheaf, and let us denote the base locale of $Q$ by $B$.
The proof has two parts: first, we construct a $Q$-set $(I,M)$; then, we prove that the Hilbert $Q$-module $Q^I M$ (\cf\ \ref{lemma:matrixtomod}) is isomorphic to $\opens(X)$.

For the set $I$ we take the set $\sections_X$ of local sections of $X$. And the matrix $M:I\times I\to Q$ is defined by
\begin{eqarray}
\ipi(Q)_{st} &=& \{a\in\ipi(Q)\st \spp(a^*)\le\spp(t)\textrm{ and }at\le s\}\label{mst1}\;,\\
m_{st} &=& \V\ipi(Q)_{st}\label{mst2}\;.
\end{eqarray}%
First we remark that the condition $at\le s$ in (\ref{mst1}) is equivalent to
\begin{eq}\label{mst3}
\spp(at)s=at
\end{eq}%
because, by \ref{iqaction}, $at$ is a local section  [\cf\ (\ref{eq:locsec1})].
Next we mention that the two conditions $\spp(a^*)\le\spp(t)$ and $at\le s$ in (\ref{mst1}) also imply $\spp(a)\le\spp(s)$, since
\begin{eq}\label{doubleproof}
\spp(a)=\spp(a\spp(a^*))\le\spp(a\spp(t))=\spp(at)\le\spp(s)\;.
\end{eq}%
In addition, for all $a\in\ipi(Q)_{st}$ we have $a^*s\le t$ because from (\ref{doubleproof}) we obtain $\spp(a)\le\spp(at)$, and
from $at\le s$ and (\ref{mst3}) we obtain $aa^*s=\spp(a)s\le\spp(at)s=at$, whence
\[a^*s=a^* a a^* s\le a^* at\le t\;.\]
It follows that $a^*\in\ipi(Q)_{ts}$, and we conclude that $M^*=M$. Furthermore, for all $a\in\ipi(Q)_{st}$ and $b\in\ipi(Q)_{tu}$ we have $ab\in \ipi(Q)_{su}$:
\begin{eqnarray*}
\spp((ab)^*)&=&\spp(b^*a^*)\le\spp(b^*)\le u\\
abu&\le&at\le s\;.
\end{eqnarray*}
Hence, $m_{st} m_{tu}\le m_{su}$. This shows that we have $M^2\le M$, which is equivalent to $M^2=M$ because $Q$ is a supported quantale (\cf\ \ref{thmQsets} in the appendix), and thus $(I,M)$ is a $Q$-set.

Therefore, by \ref{lemma:matrixtomod}, we have a Hilbert $Q$-module $Q^I M$, whose inner product is the dot product of $Q^I$, with a Hilbert basis $\sections$ consisting of the rows of $M$. As usual (\cf\ section \ref{sec:mvhb}), for each $s\in I$ we denote the $s$-row of $M$ by $\tilde s$.  The local inner product $\inner --^\ell$ of $Q^I M$, as defined by (\ref{localinnerproduct}), satisfies, for all $s,t\in I$,
\begin{eqnarray*}
\inner {\tilde s}{\tilde t}^\ell &=& \tilde s\cdot \tilde t \land e = m_{st}\land e\\
&=&\V\{a\land e\st a\in \ipi(Q)\textrm{ and }\spp(a^*)\le\spp(t)\textrm{ and }at\le s\}\\
&\le&\V\{a\land e\st a\in \ipi(Q)\textrm{ and }\spp((a\land e)^*)\le\spp(t)\textrm{ and }(a\land e)t\le s\}\\
&=&\V\{b\in B\st\spp(b^*)\le\spp(t)\textrm{ and }bt\le s\}\\
&=&\V\{b\in B\st\spp(b^*)\le\spp(t)\textrm{ and }bt\le s\}\land e\\
&\le&\V\{a\in \ipi(Q)\st\spp(a^*)\le\spp(t)\textrm{ and }at\le s\}\land e\\
&=&m_{st}\land e = \inner{\tilde s}{\tilde t}^\ell\;.
\end{eqnarray*}
Therefore all the expressions in the above derivation are equal, and we obtain
\begin{eqnarray*}
\inner{\tilde s}{\tilde t}^\ell &=& \V\{b\in B\st\spp(b^*)\le\spp(t)\textrm{ and }bt\le s\}\\
&=&\V\{b\in B\st b\le\spp(t)\textrm{ and }bt\le s\}\\
&=&\spp(s\land t)\;.
\end{eqnarray*}
Since $\spp(s\land t)$ is the $B$-valued inner product of $\opens(X)$ regarded as a $B$-sheaf,
we conclude, by \ref{equivrelQQhmb} (or \cite[Lemma 5]{RR}),  that $\opens(X)\cong B^I M^\ell$, where
$M^\ell$ is the matrix defined by
$(M^\ell)_{st}=\spp(s\land t)$. Since this matrix is also that of the local inner product of $Q^I M$, it follows that $B^I M^\ell\cong Q^I M$, and thus $\opens(X)$ is a complete Hilbert $Q$-module.
\qed
\end{proof}

\begin{remark}
We can provide a more geometric interpretation of the formula (\ref{mst2}) in terms of an \'etale groupoid $G$ rather than its quantale $\opens(G)$, whereby $m_{st}$ is the union of all the local bisections $a$ that satisfy the following three conditions (\cf\ \ref{exmtopologicalgroupoid}):
\begin{itemize}
\item the domain of $a$ is contained in the domain of the local section $s$;
\item the image of $r\circ a$ is contained in the domain of the local section $t$;
\item $a$ acts on $t$ yielding a subsection of $s$.
\end{itemize}
Hence, the logical interpretation of $G$-sheaves via $\opens(G)$-sets is a generalization of that of frame-valued sets, with the truth values now defining ``equality'' of local sections up to local translations rather than just restriction.
\end{remark}

\paragraph{The classifying topos of an \'etale groupoid.}

From \cite[Theorem 4.1]{HS2}, which applies to modular quantal frames, and hence also to inverse quantal frames (\cf\ \ref{thm:iqfaremod}), it follows that if $Q$ is an inverse quantal frame then a left $Q$-module is a complete Hilbert $Q$-module with respect to at most one inner product. Hence, being such a module is a property rather than extra structure. Similarly, being a $Q$-sheaf is a property, and the results that we have just obtained show that the two properties coincide.

Hence, in order to conclude our comparison of the classifying topos $BG$ of an \'etale groupoid $G$ and $\sets(\opens(G))$ we only need to see how the morphisms of $\opens(G)$-$\sh$ and $\maps(\opens(G)$-$\hmb)$ relate to each other:

\begin{lemma}\label{lem:main}
Let $Q$ be an inverse quantal frame. The sheaf homomorphisms of $Q$-sheaves (\cf\ \ref{def:shhom}) coincide with the arrows of $\maps(Q$-$\hmb)$.
\end{lemma}

\begin{proof}
Let $X$ and $Y$ be $Q$-sheaves and let $\varphi:\opens(X)\to \opens(Y)$ be a sheaf homomorphism of $Q$-sheaves. This is also a sheaf homomorphism of $B$-sheaves, where $B\subset Q$ is the base locale of $Q$, and thus we have $\varphi=f_!$ for a map of $B$-sheaves $f:X\to Y$ (\cf\ \ref{exm:BsheavesasHilbertmodules} and section \ref{sec:localesheavesasmodules}). The inner products of $X$ and $Y$ as $B$-sheaves are defined by the formula (\ref{localinnerproduct}), which we recall: \[\inner z{z'}^\ell=\inner z {z'}\land e\;.\]
Hence, the adjoint of $\varphi$, which is given by $\inner{\varphi(x)}y=\inner x{\varphi^\dagger(y)}$, also satisfies
${\inner{\varphi(x)}y}^\ell={\inner x{\varphi^\dagger(y)}}^\ell$, showing that $\varphi^{\dagger}$ is the adjoint of $\varphi$ also with respect to the $B$-sheaf structures of $X$ and $Y$. Therefore we have $f^*=\varphi^\dagger$, showing that $\varphi^\dagger$ is right adjoint to $\varphi$; that is, $\varphi$ is a morphism in
$\maps(Q$-$\hmb)$.

Conversely, let $\varphi:\opens(X)\to\opens(Y)$ be a morphism in $\maps(Q$-$\hmb)$.
By the same reasoning as above, this is also a morphism in $\maps(B$-$\hmb)$, and thus $\varphi$ is a sheaf homomorphism of $B$-sheaves. Since it is also a homomorphism of $Q$-modules, it is a sheaf homomorphism of $Q$-sheaves. \qed
\end{proof}

Our main result follows immediately:

\begin{theorem}\label{thm:main}
For any \'etale groupoid $G$, the category $\sets(\opens(G))$ (equivalently, $\maps(\opens(G)$-$\hmb)$) is a Grothendieck topos and it is equivalent to the classifying topos $BG$ (hence it is an \'etendue).
\end{theorem}

\begin{corollary}
Any \'etendue is equivalent to $\sets(Q)$ (equivalently, $\maps(Q$-$\hmb)$) for some inverse quantal frame $Q$.
\end{corollary}

\section{Concluding remarks}\label{sec:end}

Our results add credibility to the notion of sheaf for involutive quantales that is prevalent in the literature, due to the following two reasons:
\begin{itemize}
\item for an \'etale groupoid $G$ the $\opens(G)$-sets give us the classifying topos of $G$;
\item for involutive quantales with base locales another natural notion of ``equivariant sheaf'' is obtained by generalizing the definition of $Q$-sheaf of \ref{def:Qsheaf} and, as we have seen, for an inverse quantal frame $Q$ the resulting category $Q$-$\sh$ is equivalent to $\sets(Q)$.
\end{itemize}

Another natural criterion for assessing the value of any notion of quantale sheaf is that of whether the categories of sheaves are toposes. This topic has been very recently addressed in \cite[Proposition 4.4.12]{HeymansPhD} by showing that the category of $Q$-sheaves is a Grothendieck topos if and only if $Q$ is a so-called \emph{Grothendieck quantale}, by which is meant a modular quantal frame that satisfies an additional simple algebraic condition. (In particular, inverse quantal frames are Grothendieck quantales.)

This criterion leaves out stable quantal frames which, as we have seen (\cf\ \ref{exm:ssqnotmod}), are more general than modular quantal frames but nevertheless form an algebraically nice class of quantales. While it is not inconceivable that a slightly more restricted notion of sheaf might be appropriate for stable quantal frames, especially if this yields toposes of sheaves, presently not much motivation seems to exist in order to pursue this question due to the unavailability of natural examples of stable quantal frames beyond those that are inverse quantal frames (despite which there may be reasons for addressing even more general stably supported quantales, as hinted at in the beginning of section \ref{sec:ssq}). Nevertheless we remark that any stable quantal frame $Q$ has a base locale $B$, and that in those particular cases where $Q$ is an inverse quantal frame every complete Hilbert $Q$-module $X$ is also a $B$-sheaf, as has been proved in Theorem \ref{thm:Qsheaf}. This is a rather natural property, but it depends crucially on the fact that the Hilbert sections of $X$ cover $X$, which is false in general for Hilbert modules on arbitrary stable quantal frames, as example \ref{exm:hilbsec3} shows. Hence, if $Q$ is a stable quantal frame, we regard this cover condition as an example of an axiom that may make sense adding to the definition of sheaf for $Q$.

\section{Appendix}\label{sec:shmat}

We provide a brief survey of the variants of the notion of quantale-valued set that exist in the literature in order to make clear that they are all equivalent in the cases that interest us in this paper, namely inverse quantal frames, and even more generally in the case of stably Gelfand quantales, which encompass the known examples of quantales-as-spaces. In this way we intend to convey an idea of robustness of the definitions that surround the notion of sheaf for involutive quantales and that such sheaves can indeed be taken to be quantale-valued sets, or, equivalently, complete Hilbert modules. We shall also describe completeness of quantale-valued sets and compare it with the natural notion of completeness that arises from complete Hilbert modules, concluding that the two notions coincide.

\paragraph{Stably Gelfand quantales.}

Mulvey \cite{Curacao,MP0} has noticed that the notion of \emph{Gelfand quantale} (referred to as ``localic quantale'' in \cite{Curacao}) plays a relevant role in the study of quantales of C*-algebras. A Gelfand quantale is an involutive quantale whose right sided elements (that is, those elements $a$ such that $a1\le a$) satisfy the regularity (or ``Gelfandness'') condition $aa^*a=a$. Later, he and Ramos \cite{MulJoel,JoelPhD} have put forward the stronger notion of \emph{locally Gelfand quantale}, which is important in connection with their study of quantal sets and sheaves on involutive quantales; in a locally Gelfand quantale, all the elements $a$ such that $a\le p$ and $ap\le a$ for some projection $p$ are required to be regular. An even stronger notion is the following:

\begin{definition}\label{stablyGelfQ}
By a \emph{stably Gelfand quantale} is meant an involutive quantale $Q$ such that, for all $a\in Q$, if $aa^*a\le a$ then $aa^*a = a$ (in other words, if $a$ is ``stable'' under the operation $a\mapsto aa^*a$ then it is a ``Gelfand element'', hence the terminology).
\end{definition}

The main examples of ``quantales-as-spaces'' are stably Gelfand:

\begin{example}\label{exm:quantalesasspaces}
\begin{enumerate}
\item Any involutive quantale $Q$ that satisfies the condition $a\le aa^*a$ for all $a\in Q$ is stably Gelfand. This includes supported quantales and, hence, modular quantales. It also includes the involutive quantales associated to localic or topological open groupoids.
\item\label{maxaissg} The involutive quantale $\Max A$ of a C*-algebra $A$ is stably Gelfand because it satisfies the condition $VV^*VV^*\le VV^*\Rightarrow VV^*V=V$ for all $V\in\Max A$. To some extent this example enables us to generalize to arbitrary sub-C*-algebras the constructions, due to Kumjian and Renault \cite{Kumjian,Renault}, of \'etale groupoids from suitable commutative sub-C*-algebras of C*-algebras (``diagonals''), via a construction that associates a localic \'etale groupoid to each projection of a stably Gelfand quantale. See \cite{ICslides} for this construction and a proof that $\Max A$ is stably Gelfand.
\end{enumerate}
\end{example}

The first appearance of stably Gelfand quantales in writing seems to be in \cite{Garraway}, where these quantales (in fact quantaloids) are called \emph{pseudo-rightsided} and play an important role in unifying variants of the notion of quantale-valued set, as we shall see.

\paragraph{Quantale-valued sets.}

Let $Q$ be an involutive quantale. As we have mentioned in section \ref{sec:mvhb}, there is a logical interpretation of $Q$-sets (\cf\ comments after \ref{Qsets}). This point of view is emphasized by Mulvey and Ramos \cite{MulJoel,JoelPhD}, who adopt the style of notation introduced earlier for frame-valued sets
\citelist{ \cite{Higgs} \cite{FS} \cite{Borceux}*{Section 2} \cite{elephant}*{pp.\ 502--513}} (subsequently applied to right-sided quantales) and define a \emph{quantal set} over $Q$ to be a set $I$ equipped with mappings
\[E:I\to Q\hspace*{1cm} \qseq \cdot\cdot:I\times I\to Q\;,\]
referred to respectively as \emph{extent} and \emph{equality}, satisfying the following conditions for all $\alpha,\beta,\gamma\in I$:
\begin{enumerate}
\item $E(\alpha) = \qseq \alpha\alpha$
\item $\qseq\alpha\beta^*=\qseq\beta\alpha$
\item $\qseq\alpha\beta\qseq\beta\gamma\le\qseq\alpha\gamma$
\item $\qseq\alpha\beta \le E(\alpha)\qseq\alpha\beta$
\item $\qseq\alpha\beta \le \qseq\alpha\beta E(\beta)$.
\end{enumerate}
We remark that $E$ is used for convenience only, since it is derived from equality.
The third condition (transitivity of equality) ensures that the matrix $A$ defined by $a_{\alpha\beta}=\qseq\alpha\beta$ satisfies $AA\le A$, and the converse inequality holds due to the first and fourth (or fifth) conditions. Hence, since the mapping $E$ is redundant, a quantal set is the same as a strict $Q$-set in sense of Garraway \cite{Garraway} and Gylys \cite{G01}:

\begin{definition}\label{strictQsets}
Let $Q$ be an involutive quantale.
A $Q$-set is \emph{strict} if for all $\alpha,\beta\in I$ we have $a_{\alpha\alpha}a_{\alpha\beta}=a_{\alpha\beta}$ (and thus also $a_{\beta\alpha}a_{\alpha\alpha}=a_{\beta\alpha}$).
\end{definition}

Mulvey and Ramos further define a \emph{Gelfand quantal set} to be a quantal set satisfying the following condition for all $\alpha$ and $\beta$:
\[
\qseq\alpha\beta\le\qseq\alpha\beta\qseq\alpha\beta^*\qseq\alpha\beta\;.
\]

\begin{theorem}\label{thmQsets}
If $Q$ is stably Gelfand all the above variants of $Q$-valued set, namely $Q$-sets, quantal sets (= strict $Q$-sets), and Gelfand quantal sets, coincide. In particular, if $Q$ is a frame we obtain the usual notion of frame-valued set.
\end{theorem}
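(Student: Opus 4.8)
The three classes of structure are nested by definition: a Gelfand quantal set is in particular a quantal set, and a quantal set is precisely a strict $Q$-set, hence in particular a $Q$-set. So the statement reduces to showing that, when $Q$ is stably Gelfand, \emph{every} $Q$-set $(I,A)$ is automatically strict and automatically satisfies the Gelfand condition $a_{\alpha\beta}\le a_{\alpha\beta}a_{\alpha\beta}^{*}a_{\alpha\beta}$ for all $\alpha,\beta\in I$. The plan is to argue directly at the level of matrix entries; one should resist the temptation to restrict $A$ to the index subset $\{\alpha,\beta\}$, because the restriction of an idempotent self-adjoint matrix to a subset of the index set need not be idempotent.

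The key observation is that idempotency gives $A=AA=AAA=\cdots$. Fixing $\alpha,\beta$ and writing $b:=a_{\alpha\beta}$, so that $b^{*}=a_{\beta\alpha}$ by self-adjointness of $A$, the single summand with intermediate indices $\beta,\alpha$ in the triple product yields
\[
b=a_{\alpha\beta}=(AAA)_{\alpha\beta}=\bigvee_{\gamma,\delta\in I}a_{\alpha\gamma}\,a_{\gamma\delta}\,a_{\delta\beta}\;\ge\;a_{\alpha\beta}\,a_{\beta\alpha}\,a_{\alpha\beta}=bb^{*}b .
\]
Hence $bb^{*}b\le b$, and since $Q$ is stably Gelfand this forces $bb^{*}b=b$. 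In particular the Gelfand condition holds — with equality — so every $Q$-set is a Gelfand quantal set.

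It remains to derive strictness. Taking the single summand with intermediate index $\beta$ in $AA$ and using idempotency gives $bb^{*}=a_{\alpha\beta}a_{\beta\alpha}\le(AA)_{\alpha\alpha}=a_{\alpha\alpha}$, whence
\[
a_{\alpha\beta}=b=bb^{*}b\le a_{\alpha\alpha}\,b=a_{\alpha\alpha}a_{\alpha\beta};
\]
the reverse inequality $a_{\alpha\alpha}a_{\alpha\beta}\le a_{\alpha\beta}$ is just the summand with intermediate index $\alpha$ in $(AA)_{\alpha\beta}=a_{\alpha\beta}$. So $a_{\alpha\alpha}a_{\alpha\beta}=a_{\alpha\beta}$, and taking adjoints (or running the symmetric argument) gives $a_{\beta\alpha}a_{\alpha\alpha}=a_{\beta\alpha}$. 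Thus $(I,A)$ is strict, and the three notions coincide. For the final sentence: a frame $Q$ has trivial involution and product $\wedge$, so $aa^{*}a=a$ holds identically and $Q$ is (trivially) stably Gelfand; a routine check then identifies a $Q$-set over a frame with a frame-valued set, self-adjointness becoming symmetry of the equality predicate $\qseq{\alpha}{\beta}:=a_{\alpha\beta}$, idempotency its transitivity, and strictness — now automatic, since $\qseq{\alpha}{\beta}=\qseq{\alpha}{\beta}\wedge\qseq{\beta}{\alpha}\le\qseq{\alpha}{\alpha}$ — the inequality $\qseq{\alpha}{\beta}\le E(\alpha)$.

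There is no real obstacle in this argument; the only point that requires care is the initial decision not to pass to a two-element sub-$Q$-set, and instead to exploit the identity $A=A^{n}$ so as to produce the single inequality $a_{\alpha\beta}a_{\beta\alpha}a_{\alpha\beta}\le a_{\alpha\beta}$, to which the defining property of a stably Gelfand quantale then applies verbatim.
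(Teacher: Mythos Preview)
Your proof is correct and is essentially the same as the paper's: both extract the entrywise inequality $a_{\alpha\beta}a_{\alpha\beta}^{*}a_{\alpha\beta}\le a_{\alpha\beta}$ from $A=A^{3}$ (equivalently $AA^{*}A\le A$), apply the stably Gelfand axiom to turn it into an equality, and then combine it with $a_{\alpha\beta}a_{\beta\alpha}\le a_{\alpha\alpha}$ (from $A^{2}=A$) to obtain strictness. Your version is more explicit about the reverse inequality and the frame case, but there is no substantive difference in approach.
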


\begin{proof}
Every $Q$-set $(I,A)$ satisfies $AA^*A\le A$ and thus $a_{\alpha\beta}a_{\alpha\beta}^*a_{\alpha\beta}\le a_{\alpha\beta}$ for all $\alpha,\beta\in I$. Hence, if $Q$ is stably Gelfand we have $a_{\alpha\beta}a_{\alpha\beta}^*a_{\alpha\beta}= a_{\alpha\beta}$ for all $\alpha,\beta\in I$ and thus the $Q$-set is strict \cite[Lemma 4.1]{Garraway}:
\[a_{\alpha\beta}=a_{\alpha\beta}a_{\alpha\beta}^*a_{\alpha\beta}=a_{\alpha\beta}a_{\beta \alpha}a_{\alpha\beta}\le a_{\alpha \alpha}a_{\alpha\beta}\le a_{\alpha\beta}\;.\]
If $\Omega$ is a frame, in matrix language its frame-valued sets are the $\Omega$-valued matrices $A$ that satisfy $AA\le A=A^T$, and thus they coincide with $\Omega$-sets according to \ref{Qsets} because frames, seen as quantales with trivial involution, are stably Gelfand. \qed
\end{proof}

\paragraph{Maps.} Similarly to quantale-valued sets, there are strict notions of map:

\begin{definition}
Let $Q$ be an involutive quantale, and let $F:(I,A)\to (J,B)$ be a map of $Q$-sets.
The map $F$ is said to be \emph{strict} if it satisfies the conditions
\begin{eqarray}
f_{\beta\alpha}&=& f_{\beta\alpha}a_{\alpha\alpha}\label{dstrict}\\
f_{\beta\alpha}&=& b_{\beta\beta}f_{\beta\alpha}\label{cstrict}
\end{eqarray}%
for all $\alpha\in I$ and $\beta\in J$.
\end{definition}

\begin{lemma}\label{lemmastrict}
Let $F:(I,A)\to (J,B)$ be a map of $Q$-sets. If $(I,A)$ is strict then $F$ satisfies (\ref{dstrict}). If $(J,B)$ is strict then $F$ satisfies (\ref{cstrict}).
\end{lemma}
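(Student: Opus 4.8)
The plan is to prove the two implications separately and symmetrically, using only the defining properties of a map of $Q$-sets together with the definition of strictness for a $Q$-set. Recall that $F:(I,A)\to(J,B)$ is a matrix $F:J\times I\to Q$ satisfying $BF=F=FA$ (the relation conditions), $FF^*\le B$, and $A\le F^*F$. I want to show: if $(I,A)$ is strict then $f_{\beta\alpha}=f_{\beta\alpha}a_{\alpha\alpha}$ for all $\alpha,\beta$, and dually if $(J,B)$ is strict then $f_{\beta\alpha}=b_{\beta\beta}f_{\beta\alpha}$.

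First I would establish (\ref{dstrict}) assuming $(I,A)$ strict. The inequality $f_{\beta\alpha}a_{\alpha\alpha}\le f_{\beta\alpha}$ is immediate because $a_{\alpha\alpha}\le e$? — no, we cannot assume $Q$ unital here, so instead I argue directly: since $(I,A)$ is strict, $a_{\alpha\alpha}a_{\alpha\gamma}=a_{\alpha\gamma}$ for all $\gamma$, hence also $a_{\gamma\alpha}a_{\alpha\alpha}=a_{\gamma\alpha}$ by taking adjoints. For the reverse direction, start from $F=FA$, which gives $f_{\beta\alpha}=\V_{\gamma}f_{\beta\gamma}a_{\gamma\alpha}$. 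Applying strictness of $(I,A)$ to each term, $a_{\gamma\alpha}=a_{\gamma\alpha}a_{\alpha\alpha}$, so $f_{\beta\gamma}a_{\gamma\alpha}=f_{\beta\gamma}a_{\gamma\alpha}a_{\alpha\alpha}$, and taking the join over $\gamma$ yields $f_{\beta\alpha}=f_{\beta\alpha}a_{\alpha\alpha}$ after one more use of $F=FA$. This gives one inclusion; the other, $f_{\beta\alpha}a_{\alpha\alpha}\le f_{\beta\alpha}$, follows from $f_{\beta\alpha}a_{\alpha\alpha}\le\V_\gamma f_{\beta\gamma}a_{\gamma\alpha}=(FA)_{\beta\alpha}=f_{\beta\alpha}$ (taking the single summand $\gamma=\alpha$ in the join $\V_\gamma f_{\beta\gamma}a_{\gamma\alpha}$, which dominates $f_{\beta\alpha}a_{\alpha\alpha}$).

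The proof of (\ref{cstrict}) under the hypothesis that $(J,B)$ is strict is entirely symmetric, using $F=BF$ in place of $F=FA$: write $f_{\beta\alpha}=\V_\delta b_{\beta\delta}f_{\delta\alpha}$, apply strictness of $(J,B)$ in the form $b_{\beta\delta}=b_{\beta\beta}b_{\beta\delta}$, and conclude $f_{\beta\alpha}=b_{\beta\beta}f_{\beta\alpha}$; the reverse inclusion comes from the single-summand estimate $b_{\beta\beta}f_{\beta\alpha}\le(BF)_{\beta\alpha}=f_{\beta\alpha}$.

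I do not anticipate a genuine obstacle here — this is a routine computation with matrix products and joins. The only point requiring a little care is that $Q$ is not assumed unital, so one must never silently use a unit element; every step has to go through the relation identities $BF=F=FA$ and the strictness equations, as above. A secondary subtlety is bookkeeping the direction of the inequalities: the nontrivial content in each case is the inequality $f_{\beta\alpha}\le f_{\beta\alpha}a_{\alpha\alpha}$ (resp.\ $f_{\beta\alpha}\le b_{\beta\beta}f_{\beta\alpha}$), which is where strictness of the source (resp.\ target) $Q$-set is actually consumed; the opposite inequalities are automatic from the relation axioms alone.
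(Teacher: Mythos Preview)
Your proof is correct and follows essentially the same route as the paper: expand $f_{\beta\alpha}=(FA)_{\beta\alpha}=\V_{\gamma}f_{\beta\gamma}a_{\gamma\alpha}$, use strictness in the form $a_{\gamma\alpha}=a_{\gamma\alpha}a_{\alpha\alpha}$ to factor out $a_{\alpha\alpha}$, and recognize the result as $(FA)_{\beta\alpha}a_{\alpha\alpha}=f_{\beta\alpha}a_{\alpha\alpha}$ (and symmetrically for the codomain). Your separate verification of the reverse inequality is harmless but redundant, since the chain of equalities already yields $f_{\beta\alpha}=f_{\beta\alpha}a_{\alpha\alpha}$ directly.
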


\begin{proof}
Assume that $(I,A)$ is strict. Then for all $\alpha\in I$ and $\beta\in J$ we have
\[f_{\beta\alpha}=(FA)_{\beta\alpha}=\V_{\alpha'} f_{\beta\alpha'}a_{\alpha'\alpha}=\V_{\alpha'} f_{\beta\alpha'}a_{\alpha'\alpha}a_{\alpha\alpha}=(FA)_{\beta\alpha}a_{\alpha\alpha}=f_{\beta\alpha}a_{\alpha\alpha}\;.\]
The strictness condition (\ref{cstrict}) follows from the strictness of $(J,B)$ is a similar way using the equality $F=BF$. \qed
\end{proof}

Mulvey and Ramos \cite{MulJoel,JoelPhD} have proposed a notion of map which is formally equivalent to the above notion of strict map, and thus also equivalent to a general map because their quantal sets are strict $Q$-sets. They have also defined a map $F$ to be \emph{Gelfand} if the condition
\[f_{\beta\alpha}\le f_{\beta\alpha}f_{\beta\alpha}^*f_{\beta\alpha}\]
holds for all $\alpha$ and $\beta$, and they have proved that, if $Q$ is locally Gelfand, the composition of two Gelfand maps is a Gelfand map. In this way a category of Gelfand quantal sets and Gelfand maps is defined.

But, again, if $Q$ is stably Gelfand (rather than just locally Gelfand) there are additional simplifications, since any map is necessarily strict and Gelfand:

\begin{theorem}\label{notionsofmap}
If $Q$ is stably Gelfand all the above variants of map of $Q$-set coincide. In particular, if $Q$ is a frame we obtain the usual notion of map of frame-valued sets.
\end{theorem}

\begin{proof}
Any map $F:(I,A)\to(J,B)$ satisfies $FF^*F\le BF=F$, and thus
$f_{\beta\alpha}f_{\beta\alpha}^*f_{\beta\alpha}\le f_{\beta\alpha}$ for all $\alpha$ and $\beta$. Hence, if $Q$ is stably Gelfand we obtain
$f_{\beta\alpha}f_{\beta\alpha}^*f_{\beta\alpha}= f_{\beta\alpha}$, showing that $F$ is a Gelfand map. And $F$ is strict due to \ref{thmQsets} and \ref{lemmastrict}.
Finally, in matrix language, for a frame $\Omega$ a map of $\Omega $-valued sets $F:(I,A)\to (J,B)$ is \cite{elephant}*{pp.\ 502--513} an $\Omega $-valued matrix $F: J\times I\to \Omega$ satisfying
\begin{eqnarray*}
f_{\beta\alpha}&\le&a_{\alpha\alpha}\wedge b_{\beta\beta}\\
f_{\beta\alpha}\wedge a_{\alpha\alpha'}\wedge b_{\beta\beta'}&\le&f_{\beta'\alpha'}\\
f_{\beta\alpha}\wedge f_{\beta'\alpha}&\le& b_{\beta\beta'}\\
a_{\alpha\alpha}&\le&\V_{\beta} f_{\beta\alpha}
\end{eqnarray*}
for all $\alpha\in I$ and $\beta\in J$, and it is easy to see that this is just the definition of strict map in frame language: strictness is the first condition; together with the second one it gives us $BFA= F$; the third one is $FF^*\le B$; and the fourth one is $a_{\alpha\alpha}\le (F^*F)_{\alpha\alpha}$, which together with the strictness of $(I,A)$ gives us $A\le F^*F$. \qed
\end{proof}

\paragraph{Complete quantale-valued sets.} In order to describe completeness of quantale-valued sets it will be convenient to depart from our usual convention and identify mappings $S:I\to Q$ with column matrices instead of row matrices --- with the adjoint $S^*$ being regarded as a row matrix. The following definition is adapted from \cite{Garraway,G01}.

\begin{definition}\label{def:singleton}
Let $Q$ be an involutive quantale, and let $(I,A)$ be a $Q$-set. By a \emph{singleton map} (or simply a \emph{singleton}) of $(I,A)$ is meant a mapping $S:I\to Q$ for which there exists a projection $q\in Q$ such that $S$, regarded as a column matrix $I\times\{*\}\to Q$, defines a map of $Q$-sets $S:[q]\to (I,A)$ where $[q]$ is the $Q$-set defined by the $\{*\}\times \{*\}$ matrix with single entry $q$.
\end{definition}

In other words, $S:I\to Q$ is a singleton if and only if it satisfies, for some projection $q$, the following conditions for all $\alpha,\beta\in I$:
\begin{eqarray}
s_\alpha &=& s_\alpha q \label{sing1}\\
q&\le& S^*S=\V_{\gamma\in I} s^*_{\gamma}s_{\gamma} \label{sing2}\\
s_\alpha&=&\V_{\gamma\in I} a_{\alpha\gamma}s_\gamma \label{sing3}\\
s_\alpha s^*_\beta&\le& a_{\alpha\beta} \label{sing4}\;.
\end{eqarray}%
Again there is a logical interpretation, namely $S$ can be regarded as a ``subset'' of $I$ with $s_\alpha$ being the truth value of the assertion that $\alpha$ belongs to $S$. In particular, (\ref{sing3}) implies
\begin{eq}\label{sing3prime}
a_{\alpha\beta}s_\beta\le s_\alpha\;,
\end{eq}%
which can be read ``if $\alpha$ equals $\beta$ and $\beta$ is in $S$ then so is $\alpha$'', and (\ref{sing4}) can be read ``if $\alpha$ is in $S$ and $\beta$ is in $S$ then $\alpha$ equals $\beta$''; the latter expresses the idea that $S$ is a ``singleton subset''.

We remark that if $(I,A)$ is a strict $Q$-set then, multiplying both sides of (\ref{sing3}) on the left by $a_{\alpha\alpha}$, we obtain a strictness condition for singletons:
\begin{eq}\label{eq:strictsingleton}
a_{\alpha\alpha}s_{\alpha}=s_{\alpha}\;.
\end{eq}%
We also remark that the conjunction of (\ref{sing3prime}) and (\ref{eq:strictsingleton}) is equivalent to (\ref{sing3}).

As before, simplifications arise if $Q$ is stably Gelfand:

\begin{theorem}\label{RMsingleton}
Let $Q$ be a stably Gelfand quantale, and let $S:I\to Q$ be a mapping. Then $S$ is a singleton if and only if it satisfies (\ref{sing4}) and (\ref{sing3prime}). Furthermore, if $S$ is a singleton we have
\begin{eq}\label{eq:gelfandsingleton}
s_{\alpha}s_{\alpha}^*s_{\alpha}=s_{\alpha}
\end{eq}%
for all $\alpha\in I$.
\end{theorem}

\begin{proof}
If $S$ is a singleton it satisfies (\ref{sing4}) by definition, and (\ref{sing3prime}) follows from (\ref{sing3}), as we have already mentioned above. Conversely, assume that $S$ satisfies (\ref{sing4}) and (\ref{sing3prime}), and let $q=S^*S$. This projection trivially satisfies (\ref{sing2}). It also satisfies (\ref{sing1}) because, by \ref{notionsofmap}, $S$ is necessarily a Gelfand map:
\begin{eqarray}s_\alpha q &=& s_\alpha\V_{\gamma} s^*_\gamma s_\gamma\ge s_\alpha s^*_\alpha s_\alpha\ge s_\alpha\;; \label{firstline}\\
s_\alpha q &=&\V_{\gamma} s_\alpha s^*_\gamma s_\gamma
\le\V_\gamma a_{\alpha\gamma} s_{\gamma}\le s_\alpha\;.\label{secondline}
\end{eqarray}%
In addition, we conclude that all the above inequalities are in fact equalities, and thus from (\ref{secondline}) we obtain (\ref{sing3}). Hence, $S$ is a singleton. Similarly, the conclusion that singletons satisfy (\ref{eq:gelfandsingleton}) follows from (\ref{firstline}). \qed
\end{proof}

Mulvey and Ramos \cite{MulJoel,JoelPhD} define singletons in a different way. According to their definition a singleton is, in matrix language, a mapping $S:I\to Q$ that satisfies (\ref{sing4}) and (\ref{sing3prime}) together with the condition $s_{\alpha}\le s_{\alpha}s_{\alpha}^*s_{\alpha}$. Hence, if $Q$ is a stably Gelfand quantale their notion of singleton is equivalent to the one we have been using. In particular, if $Q$ is a locale this coincides with the standard notion of singleton for frame-valued sets \citelist{ \cite{Higgs} \cite{FS} \cite{Borceux}*{Section 2} \cite{elephant}*{pp.\ 502--513}}.

Let $(I,A)$ be a $Q$-set, again with $Q$ stably Gelfand.
It is immediate that every column of $A$ is a singleton. A $Q$-set is said to be \emph{complete} if each of its singletons arises in this way from a unique column of $A$, and there is a notion of \emph{completion} of $(I,A)$, which is the complete $Q$-set that consists of the
set of all the singletons of $(I,A)$ equipped with the matrix $\widehat A$ defined by a dot product:
\[\hat a_{ST}=S^*T=\V_{\alpha\in I}s_\alpha^*t_\alpha\;.\]
If $B$ is a frame, the difference between arbitrary $B$-sets and the complete ones is that the latter are more canonical in the sense that there is a functor from $\sh(B)$ to $\sets(B)$ that to each sheaf assigns a $B$-set that is complete. However, it is well known that the category $\sets(B)$ is equivalent to its full subcategory of complete $B$-sets (and equivalent to $\sh(B)$). Analogously, for a stably Gelfand quantale $Q$ the category $\sets(Q)$ is equivalent to its full subcategory of complete $Q$-sets. This fact is proved by Garraway \cite[Section 4]{Garraway} and it shows that, to a large extent, for stably Gelfand quantales the completeness of $Q$-sets is irrelevant.

\paragraph{Completeness via Hilbert modules.}

Now let us compare, for a stably Gelfand quantale $Q$, the notion of complete Hilbert $Q$-module with the notion of complete $Q$-set, namely seeing that the former is more canonical than $Q$-sets exactly for the same reason that complete $Q$-sets are: the completion of a $Q$-set $(I,A)$ coincides (up to renaming of matrix indices) with the $Q$-set associated to the Hilbert $Q$-module $Q^I A$.

\begin{theorem}
Let $Q$ be a stably Gelfand quantale, let $(I,A)$ be a $Q$-set, and let $S:I\to Q$ be an arbitrary mapping. The following statements are equivalent:
\begin{enumerate}
\item $S$ is a singleton of $(I,A)$;
\item $S^*$ is a Hilbert section of the Hilbert $Q$-module $Q^I A$.
\end{enumerate}
Moreover, letting $(\widehat I,\widehat A)$ be the completion of $(I,A)$ we have, for all $S,T\in \widehat I$, the following equality:
\begin{equation}\label{Avsip}
(\widehat A)_{ST} = \langle S^*,T^*\rangle\;.
\end{equation}
\end{theorem}

\begin{proof}
First assume that $S$ is a singleton; that is, $S$ satisfies the following two conditions:
\begin{eqnarray*}
AS &=& S\;,\\
SS^* &\le & A\;.
\end{eqnarray*}
Hence, $S^*$ is in $Q^I A$, for $S^*A = S^* A^* = (AS)^* = S$.
Moreover, for any other mapping $\vect v\in Q^I A$ (regarded as a row matrix) we have
\[\langle \vect v, S^*\rangle S^* = \vect v S S^* \le \vect v A=\vect v\;,\]
and thus $S^*$ is a Hilbert section.

Now assume that $S^*$ is a Hilbert section of $Q^I A$ (regarded as a row matrix). The condition $AS=S$ follows from $S^* A=S^*$, and the Hilbert section condition gives us $\tilde s S S^*\le \tilde s$ for all $s\in I$:
\[\tilde s S S^* = \langle\tilde s, S^*\rangle S^*\le\tilde s\;.\]
Hence, we obtain $ASS^*\le A$, \ie, $SS^*\le A$, and thus $S$ is a singleton.

Finally, (\ref{Avsip}) is immediate:
$(\widehat A)_{ST}=S^* T = S^*T^{**} = \langle S^*,T^*\rangle$. \qed
\end{proof}

\begin{bibdiv}

\begin{biblist}

\bibselect{bibliography}

\end{biblist}

\end{bibdiv}
~\\
{\sc
Centro de An\'alise Matem\'atica, Geometria e Sistemas Din\^amicos
Departamento de Matem\'{a}tica, Instituto Superior T\'{e}cnico\\
Universidade T\'{e}cnica de Lisboa\\
Av.\ Rovisco Pais 1, 1049-001 Lisboa, Portugal}\\
{\it E-mail:} {\sf pmr@math.ist.utl.pt}
\end{document}